\documentclass[11pt, letterpaper,final]{amsart}
\usepackage{amsfonts,amsmath, amsthm, amssymb, latexsym, epsfig}
\usepackage[english]{babel}
\usepackage[utf8]{inputenc}
\usepackage[all]{xy}
\usepackage{xspace}
\usepackage{comment}
\usepackage{setspace}
\usepackage{enumerate}
\usepackage{stmaryrd}
\usepackage{xcolor}
\usepackage{hyperref}
\usepackage[mathscr]{eucal}
\usepackage[notcite,notref]{showkeys}
\usepackage{enumitem}

	\newcommand{\ZZ}{\mathbb{Z}}
	\newcommand{\CC}{\mathbb{C}}

	\newcommand{\NN}{\mathbb{N}}

	\newcommand{\Kk}{\mathcal{K}}
    \newcommand{\V}{\mathcal{V}}
    \newcommand{\D}{\mathcal{D}}
    \newcommand{\Z}{\mathcal{Z}}
        \newcommand{\N}{\mathcal{N}}

    \newcommand{\spn}{\operatorname{span}}

\newtheorem{theoremintro}{Theorem}

\newtheorem*{questionintro}{Question}

	\newtheorem{thm}{Theorem}[section]
	\newtheorem{cor}[thm]{Corollary}
	\newtheorem{lem}[thm]{Lemma}
	\newtheorem{prp}[thm]{Proposition}

	\theoremstyle{definition}
	\newtheorem{dfn}[thm]{Definition}

	\theoremstyle{remark}
	\newtheorem{rmk}[thm]{Remark}

\begin{document}
	
    \title[Diagonal-preserving isomorphisms]{Diagonal-preserving isomorphisms of algebras from infinite graphs}
    

 \author{S{\o}ren Eilers}
        \address{Department of Mathematical Sciences \\
        University of Copenhagen\\
        Universitetsparken~5 \\
        DK-2100 Copenhagen, Denmark}
        \email{eilers@math.ku.dk }

	\author{Efren Ruiz}
        \address{Department of Mathematics\\University of Hawaii,
Hilo\\200 W. Kawili St.\\
Hilo, Hawaii\\
96720-4091 USA}
        \email{ruize@hawaii.edu}

        \date{\today}
	\subjclass[2000]{16S88, 22A22, 46L35, 46L80, 57M07}
        \keywords{Leavitt path algebras, Graph C*-algebras, Groupoids, Topological full groups}
\thanks{This research is part of the EU Staﬀ Exchange project 101086394 \emph{Operator Algebras That One
Can See}. E. Ruiz was partially supported by a Collaborative-NSF-BSF grant (DMS 2452325)}

\begin{abstract}
We establish logical equivalence between statements involving
\begin{itemize}
\item the Cuntz $C^*$-algebra $\mathcal O_\infty$ with its canonical diagonal;
\item graph $C^*$-algebras  with their canonical diagonals;
\item Leavitt path algebras over general fields with their canonical diagonals;
\item Leavitt path algebras over $\mathbb Z$;
\item  topological full groups;
\item  groupoids; and
\item the automorphism  $x\mapsto -x$ on certain $K_0$- and homology groups equal to $\mathbb Z$.
\end{itemize}
Deciding whether these equivalent statements are true or false is of importance in   studies of geometric classification of diagonal-preserving isomorphism between graph $C^*$-algebras and Leavitt path algebras, mirroring a similar hindrance studied by Cuntz more than 40 years ago.
\end{abstract}

\dedicatory{Dedicated to Professor Gene Abrams on the occasions of his retirement and 70th birthday.}

\maketitle

\section{Introduction}
After having taken the first step, with Krieger \cite{CK80}, in what would evolve to become the complete classification of unital graph $C^*$-algebras, Joachim Cuntz drew attention to the pair of graphs

\medskip
\[
\xymatrix{\bullet\ar@(u,l)[]\ar@(dl,ul)[]} \qquad\qquad\qquad \xymatrix{\bullet\ar@(u,l)[]\ar@(dl,ul)[]\ar@/^/[r]&\bullet\ar@(ul,ur)[]\ar@/^/[l]\ar@/^/[r]&\bullet\ar@(ul,ur)[]\ar@/^/[l]}
\]
\medskip

\noindent at the 1983 Kyoto conference ``Geometric methods in Operator Algebras'' \cite{C86}.  The $C^*$-algebra defined by the graph to the left is just the Cuntz algebra $\mathcal O_2$, and introducing the notation $\mathcal O_{2,-}$ for the $C^*$-algebra defined by the graph to the right, Cuntz pointed out that it was unknown if they would define the same $C^*$-algebra. He further  explained how an answer to this question was a natural stepping stone towards a $K$-theoretic
classification of simple Cuntz-Krieger algebras, and also showed that the isomorphism would follow if one could realize the unique non-trivial unit-preserving  isomorphism of the $K$-theory of the $C^*$-algebra given by the graph

\medskip
\[
\xymatrix{\bullet\ar@(ul,ur)[]\ar@/^/[r]&\bullet\ar@(ul,ur)[] \ar@/^/[l]\ar@/^/[r] & \bullet  \ar@/^/[l]\ar@(ul,ur)[]}
\]
\medskip
\noindent by a $*$-automorphism.

Cuntz subsequently developed the ideas presented in \cite{C86} to the effect that when R\o{}rdam showed $\mathcal O_2$ and $\mathcal O_{2,-}$ are indeed isomorphic in the mid-nineties \cite{Ror95}, the classification of simple Cuntz-Krieger algebras was instantly completed.

And although these events predate by decades the definition of Leavitt path algebras that we celebrate here along with Gene Abrams, Cuntz' question remains famously open in the form that we still do not know whether there is a field $\mathsf{k}$ for which the Leavitt path algebras $L_2^{\mathsf{k}}$ and $L_{2,-}^{\mathsf{k}}$ are isomorphic.  In analogy to Cuntz' work, Abrams \emph{et al.} \cite{AALP08, ALPS11} and Ruiz \cite{RuizBLMS25}, showed the role of this isomorphism question for Leavitt path algebras to the classification of simple Leavitt path algebras.

In the endeavor to understand structure-preserving isomorphism amongst unital graph $C^*$-algebras and Leavitt path algebras, we are finding ourselves facing a similar impasse concerning the pair of graphs with infinitely many edges

\medskip

\[
E(\infty): \qquad \xymatrix{\circ\ar@(ul,ur)@{=>}[]} \qquad\qquad E(\infty,-) : \qquad  \xymatrix{\bullet\ar[r]&\circ\ar@(ul,ur)@{=>}[]\ar@/^/[r]&\bullet\ar@(ul,ur)[]\ar@/^/[l]\ar@/^/[r]&\bullet\ar@(ul,ur)[]\ar@/^/[l]}
\]
\medskip

\noindent which define the standard $C^*$-algebras and Leavitt path algebras $\mathcal O_\infty$ and $L_\infty^{\mathsf{k}}$ on one hand, and what we will call $\mathcal O_{\infty,-}$ and $L_{\infty,-}^{\mathsf{k}}$ on the other.  We are concerned with the following questions.    

\begin{questionintro}
$ $
\begin{enumerate}
\item Is there a diagonal-preserving isomorphism between $\mathcal O_\infty$ and $\mathcal O_{\infty,-}$?

\item Is there a diagonal-preserving isomorphism between $L_{\infty}^{\mathsf{k}}$ and $L_{\infty,-}^{\mathsf{k}}$?
\end{enumerate}
\end{questionintro}

Pioneered by Kengo Matsumoto \cite{KMbook}, considering graph $C^*$-algebras and Leavitt path algebras together with their respective diagonal subalgebras have shown to have profound rigidity properties.  It has led to the celebrated result that flow equivalence of shift spaces of finite type is characterized by the isomorphism class of the stabilized graph algebra together with its diagonal subalgebra \cite{MM13, CEOR} and the result that $L_2^\mathbb{Z}$ and $L_{2,-}^\mathbb{Z}$ are not isomorphic as $*$-algebras \cite{JS16,TCadv}.  Because of the latter and the fact that all known classification results of Leavitt path algebras \cite{ AALP08,AAP08,ALPS11} can be shown to be a classification of Leavitt path algebras with their diagonal subalgebras, one can speculate that the reason a proof that $L_2^\mathsf{k}$ and $L_{2,-}^\mathsf{k}$ are isomorphic has eluded us for over a decade is that these Leavitt path algebras are not isomorphic.  Furthermore, considering graph algebras with their diagonal subalgebras have allowed researchers to obtain a Rosetta Stone between graph $C^*$-algebras and Leavitt path algebras, where properties shared by both can be simultaneously proved using their associated groupoids.

As the main new contribution in this note, we show a result parallelling \cite{C86} in reducing the isomorphism question to a question concerning the $*$-automorphic realization of the unique non-trivial automorphism on the $K$-theory of $\mathcal O_\infty\otimes\mathbb K$. But in this instance, however, many things differ from Cuntz' setup. First, we know from the outset that $\mathcal O_\infty$ and $\mathcal O_{\infty,-}$ are isomorphic as $C^*$-algebras (\cite{ERRSMathAnnalen, ER-Refined}); the open question is whether they are isomorphic in a way preserving the diagonal subalgebra. This in turn, by existing work, can be translated into questions on groupoids or topological full groups, and we also know that our base question is equivalent to the same question for Leavitt path algebras, and to an isomorphism question for the Leavitt path algebras over $\mathbb Z$.  The difficulty here is that there is a diagonal-preserving isomorphism between $\mathcal O_\infty \otimes \mathcal{K}$ and $\mathcal O_{\infty,-} \otimes \mathcal{K}$ and there is a diagonal-preserving isomorphism between $\mathsf{M}_\mathbb{N}(L_\infty^{\mathsf{k}})$ and $\mathsf{M}_\mathbb{N}(L_{\infty,-}^{\mathsf{k}})$ \cite{Sor13, RT13,ER-Refined}, but not obviously one before stabilization.

Answering the above questions have consequences for a conjecture made by the authors in \cite{ER-Refined}.  We conjectured that two graph $C^*$-algebras are isomorphic via an isomorphism that preserves the diagonal subalgebras if and only if the Leavitt path algebras are isomorphic via an isomorphism that preserves the diagonal subalgebras if and only if the associated graphs are equal in the coarsest equivalence relation containing the unital flow moves.  We showed in \cite{ER-Refined} that a positive answer to the above questions implies our generating conjecture would be false as we have shown that one can not move $E(\infty)$ to $E(\infty, -)$ using the unital flow moves.

For a graph $E$, $\Sigma E$ will denote the graph where for each $v \in E^0$, we attach a countably infinite fan at $v$,
\[
\xymatrix{ v_1 \ar[d]_-{e_1^v} &  \ar[dl]_-{e_2^v} v_2  & \ar[dll]^-{e_3^v} v_3\ldots    \\ 
         v}.
\]
For an \'etale groupoid $\mathcal{G}$, its \emph{topological full group}, $[[\mathcal{G}]]$, is the set of all homeomorphism $\alpha$ on $\mathcal{G}^{(0)}$ such that there exists an open set $U$ of $\mathcal{G}$ for which the range and source maps of $G$ are injective on $U$, $r(U)=s(U)=\mathcal{G}^{(0)}$, $\alpha (s(g)) = r(g)$ for all $g \in U$, and $\operatorname{supp} \alpha = \overline{\{ x \in \mathcal{G}^{(0)} : \alpha(x) \neq x \} }$ is compact.

\begin{theoremintro}\label{thm-intro}
The following are equivalent.
\begin{enumerate}[label=\textbf{D.\arabic*}]
    \item \label{EF-cstar} The graph $C^*$-algebras $\mathcal{O}_\infty$ and $\mathcal{O}_{\infty,-}$ are isomorphic via an isomorphism that preserves the diagonal subalgebras.

    \item \label{cartan} If $\mathcal{O}_\infty$ is isomorphic to $C^*(H)$, then there is a diagonal-preserving isomorphism between $\mathcal{O}_\infty$ and $C^*(H)$.

    \item \label{EF-LPA} The Leavitt path algebras $L_\infty^{\mathsf{k}}$ and $L_{\infty,-}^\mathsf{k}$ are isomorphic via an isomorphism that preserves the diagonal subalgebras.

    \item \label{EF-LPA-Z} The Leavitt path algebras $L_\infty^{\mathbb{Z}}$ and $L_{\infty, -}^\mathbb{Z}$ are $*$-isomorphic.

    \item \label{EF-Groupoid} The graph groupoids $\mathcal{G}_{E(\infty)}$ and $\mathcal{G}_{E(\infty,-)}$ are isomorphic.

    \item \label{EF-fullgrps} The topological full groups $[[\mathcal{G}_{E(\infty)}]]$ and $[[\mathcal{G}_{E(\infty,-)}]]$ are isomorphic.

    \item \label{Oinfty} There exists a diagonal-preserving stable automorphism of $\mathcal{O}_\infty$ that induces the non-trivial automorphism on $K_0(\mathcal{O}_\infty)$.

    \item \label{Linfty} There exists a diagonal-preserving stable automorphism of $L_\infty^{\mathsf{k}}$ that induces the non-trivial automorphism on $K_0(L^\mathsf{k}_\infty)$.

    \item \label{Linfty-Z} There exists a stable $*$-automorphism of $L_{\infty}^\mathbb{Z}$ that induces the non-trivial automorphism on $K_0(L^\mathbb{Z}_\infty)$.

    \item \label{Groupoid} There exists a groupoid automorphism on $\mathcal{G}_{\Sigma E(\infty)}$ that induces the non-trivial automorphism on the zeroth groupoid homology.
\end{enumerate}
\end{theoremintro}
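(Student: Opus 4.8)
The plan is to split the ten conditions into an \emph{unstabilized cluster} --- \ref{EF-cstar}, \ref{cartan}, \ref{EF-LPA}, \ref{EF-LPA-Z}, \ref{EF-Groupoid}, \ref{EF-fullgrps} --- which we show are all equivalent to the single statement ``$\mathcal{G}_{E(\infty)}\cong\mathcal{G}_{E(\infty,-)}$'', and a \emph{stabilized cluster} --- \ref{Oinfty}, \ref{Linfty}, \ref{Linfty-Z}, \ref{Groupoid} --- which we show are all equivalent to ``$\mathcal{G}_{\Sigma E(\infty)}$ admits an automorphism inducing $x\mapsto -x$ on $H_0$'' (here $H_0(\mathcal{G}_{\Sigma E(\infty)})\cong K_0(\mathcal{O}_\infty)\cong\mathbb{Z}$, whose automorphism group is $\{\pm\mathrm{id}\}$, so ``the non-trivial automorphism'' is unambiguous). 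The genuinely new step --- parallelling Cuntz' $K$-theoretic reduction in \cite{C86} --- is the bridge \ref{EF-Groupoid} $\Leftrightarrow$ \ref{Groupoid} between the two clusters.

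\textbf{The two clusters.} Here one uses only established reconstruction machinery together with known facts about these particular algebras. For the unstabilized cluster: Renault's Cartan reconstruction theorem in its graph-$C^*$-algebraic form gives \ref{EF-cstar} $\Leftrightarrow$ \ref{EF-Groupoid}; the Steinberg-algebra reconstruction theorem over a general field gives \ref{EF-LPA} $\Leftrightarrow$ \ref{EF-Groupoid}, using that $E(\infty)$ and $E(\infty,-)$ satisfy Condition~(L), so the graph groupoids are topologically principal; over $\mathbb{Z}$ the canonical diagonal of a Leavitt path algebra is intrinsic to the $*$-ring structure, which upgrades this to \ref{EF-LPA-Z} $\Leftrightarrow$ \ref{EF-Groupoid} (the mechanism behind \cite{JS16, TCadv}); and Matui's isomorphism theorem for topological full groups --- applicable since $\mathcal{G}_{E(\infty)}$ and $\mathcal{G}_{E(\infty,-)}$ are second countable, ample, minimal, purely infinite and topologically principal with compact unit space --- gives \ref{EF-Groupoid} $\Leftrightarrow$ \ref{EF-fullgrps}. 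Moreover \ref{cartan} $\Rightarrow$ \ref{EF-cstar} is the instance $H=E(\infty,-)$ (legitimate since $\mathcal{O}_{\infty,-}=C^*(E(\infty,-))\cong\mathcal{O}_\infty$), and \ref{EF-cstar} $\Rightarrow$ \ref{cartan} follows from the geometric classification of unital graph $C^*$-algebras (any graph $H$ with $C^*(H)\cong\mathcal{O}_\infty$ is move equivalent to $E(\infty)$) together with the bridge, which supplies the adjustment needed to de-stabilize the diagonal-preserving isomorphism produced by the moves. For the stabilized cluster: the fan construction realizes stabilization, $\mathcal{G}_{\Sigma E}\cong\mathcal{G}_E\times\mathcal{R}$ with $\mathcal{R}$ the full equivalence relation on a countable set, so that stable diagonal-preserving automorphisms of $\mathcal{O}_\infty$ and of $L_\infty^{\mathsf{k}}$, and stable $*$-automorphisms of $L_\infty^{\mathbb{Z}}$, all correspond via the same reconstruction theorems to automorphisms of $\mathcal{G}_{\Sigma E(\infty)}$; under the identification $K_0\cong H_0$ for graph groupoids, ``induces the non-trivial automorphism on $K_0$'' becomes ``induces $x\mapsto -x$ on $H_0$'', giving \ref{Oinfty} $\Leftrightarrow$ \ref{Linfty} $\Leftrightarrow$ \ref{Linfty-Z} $\Leftrightarrow$ \ref{Groupoid}.

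\textbf{The bridge.} It remains to prove \ref{EF-Groupoid} $\Leftrightarrow$ \ref{Groupoid}, and this rests on two inputs. First, the \emph{known} diagonal-preserving isomorphism after stabilization \cite{Sor13, RT13, ERRSMathAnnalen, ER-Refined} is, at the level of groupoids, an isomorphism $\Theta\colon\mathcal{G}_{\Sigma E(\infty)}\to\mathcal{G}_{\Sigma E(\infty,-)}$, and --- this is the analogue of Cuntz' observation in \cite{C86} --- a computation with the Cuntz-splice move underlying it shows that $\Theta$ induces the \emph{non-trivial} automorphism $x\mapsto -x$ on $H_0\cong\mathbb{Z}$. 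Second, since $\mathcal{G}_{\Sigma E(\infty)}$ and $\mathcal{G}_{\Sigma E(\infty,-)}$ are ample, minimal and purely infinite, two compact open subsets of the unit space are conjugate by a compact open bisection --- which then induces an isomorphism of the corresponding restricted groupoids --- exactly when they have the same class in $H_0$. Now write $U\subseteq\mathcal{G}_{\Sigma E(\infty)}^{(0)}$ and $U'\subseteq\mathcal{G}_{\Sigma E(\infty,-)}^{(0)}$ for the clopen ``non-fan'' subsets, so that $\mathcal{G}_{\Sigma E(\infty)}|_U=\mathcal{G}_{E(\infty)}$, $\mathcal{G}_{\Sigma E(\infty,-)}|_{U'}=\mathcal{G}_{E(\infty,-)}$, $[U]=[1_{\mathcal{O}_\infty}]$ and $[U']=[1_{\mathcal{O}_{\infty,-}}]$; a direct count of the relevant cokernels shows both of these classes generate $H_0$, and we normalize the two copies of $H_0$ so that they equal $1\in\mathbb{Z}$. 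If \ref{EF-Groupoid} holds, a groupoid isomorphism $\psi\colon\mathcal{G}_{E(\infty)}\to\mathcal{G}_{E(\infty,-)}$ carries unit space to unit space, so $\psi\times\mathrm{id}_{\mathcal{R}}\colon\mathcal{G}_{\Sigma E(\infty)}\to\mathcal{G}_{\Sigma E(\infty,-)}$ has $H_0$-effect $1\mapsto 1$; composing with $\Theta^{-1}$ (of $H_0$-effect $1\mapsto -1$) yields an automorphism of $\mathcal{G}_{\Sigma E(\infty)}$ with $H_0$-effect $x\mapsto -x$, which is \ref{Groupoid}. Conversely, given such an automorphism $\beta$, the composite $\Theta\circ\beta\colon\mathcal{G}_{\Sigma E(\infty)}\to\mathcal{G}_{\Sigma E(\infty,-)}$ has $H_0$-effect the identity, so $(\Theta\circ\beta)(U)$ and $U'$ share their class in $H_0(\mathcal{G}_{\Sigma E(\infty,-)})$; conjugating $\mathcal{G}_{\Sigma E(\infty,-)}|_{(\Theta\circ\beta)(U)}$ onto $\mathcal{G}_{\Sigma E(\infty,-)}|_{U'}=\mathcal{G}_{E(\infty,-)}$ by a compact open bisection and composing with the restriction of $\Theta\circ\beta$ to $U$ produces $\mathcal{G}_{E(\infty)}\cong\mathcal{G}_{E(\infty,-)}$, which is \ref{EF-Groupoid}.

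\textbf{Main obstacle.} I expect the crux to be precisely the sign computation in the first input above: that the canonical stabilized diagonal-preserving isomorphism $\mathcal{O}_\infty\otimes\mathcal{K}\cong\mathcal{O}_{\infty,-}\otimes\mathcal{K}$ induces $x\mapsto -x$, and not the identity, on $K_0\cong\mathbb{Z}$. This must be extracted from an explicit description of the normalizing partial isometries implementing the Cuntz-splice step, tracking their effect on the classes of the old and new vertex projections --- the relation $[u_2]=-[v]$ created by the splice is what ultimately produces the sign. Everything else is an assembly of established reconstruction and comparison theorems; and it is exactly this sign, rather than $+\mathrm{id}$, that both makes the bridge run and accounts for \ref{EF-cstar}--\ref{Groupoid} not being already known to be true --- a $+\mathrm{id}$ here could be de-stabilized directly against the unit projections, resolving the Question affirmatively.
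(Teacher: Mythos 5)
Your overall architecture is essentially the paper's (the two clusters handled by citation of reconstruction theorems, plus a bridge that rests on a sign computation together with a de-stabilization step), but the two inputs your bridge rests on are exactly the points that require proof, and neither can be obtained by citation. The de-stabilization principle you invoke --- that in the stabilized groupoids two compact open subsets of the unit space with the same $H_0$-class are conjugate by a compact open bisection --- is not an off-the-shelf fact in this setting: Matui-type comparison results of this kind are stated for minimal purely infinite ample groupoids with \emph{compact} unit space, whereas $\mathcal{G}_{\Sigma E(\infty,-)}$ has non-compact unit space and the graphs have infinite emitters, and Matsumoto's algebra-side counterpart \cite{KM2013} covers only irreducible (strongly connected, finite) graphs. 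Supplying this step in the required generality is precisely the paper's main technical contribution: Theorem~\ref{thm:monoid} and Corollary~\ref{cor:monoid} identify $\V^*(C^*(E),\D(E))$ --- equivalence of diagonal projections via normalizers --- with $\V(C^*(E))$ for arbitrary graphs, and that is what allows a stable diagonal-preserving isomorphism to be cut down to a unital one. You would have to either prove your groupoid comparison statement for these non-compact purely infinite groupoids (say by restricting to full compact open subsets and using Morita invariance of $H_0$) or prove something like Corollary~\ref{cor:monoid}; as written this is a genuine missing piece, not an assembly of known results.

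The second input, the sign computation, you correctly flag as the crux but leave unexecuted, and in the form you state it (``the known stabilized diagonal-preserving isomorphism $\Theta$ of \cite{Sor13,RT13,ERRSMathAnnalen,ER-Refined} induces $x\mapsto -x$'') it cannot be cited: those references do not pin down the $K_0$/$H_0$ effect of the isomorphism they produce, and your bridge fails in both directions if that sign were $+1$ (indeed a $+1$ would settle the open question affirmatively, so no citation can carry this weight). The paper's device is to first replace $E(\infty)$ and $E(\infty,-)$ by the outsplit graphs $E$ and $F$, with $F$ equal to $E$ with sources attached, so that the canonical stable isomorphism coming from the unital reduction move fixes the common vertex projections; the sign then drops out of the unit computation $[1_{C^*(E)}]=[p_v]+[p_w]=-[p_w]$ versus $[1_{C^*(F)}]=[p_v]+3[p_w]=[p_w]$. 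Your sketch needs an equally explicit $\Theta$, which you do not provide. Finally, your treatment of \ref{cartan} rests on the claim that any $H$ with $C^*(H)\cong\mathcal{O}_\infty$ is move equivalent to $E(\infty)$; for the diagonal-preserving (unital flow) moves this is false --- $E(\infty,-)$ itself is the counterexample the whole paper is about --- and the paper instead proves \ref{EF-cstar}$\Rightarrow$\ref{cartan} by a determinant dichotomy via \cite[Theorem~8.6]{ERRSDuke}, \cite[Lemma~7.1.6]{ER-Refined} and \cite[Theorem~5.4]{ARS}: when the relevant equivalence has determinant $+1$ one gets the diagonal-preserving isomorphism directly, and when it has determinant $-1$ one routes through $E(\infty,-)$ using \ref{EF-cstar}. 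Your phrase that ``the bridge supplies the adjustment'' gestures at this but does not constitute the argument.
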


The fact that \eqref{EF-cstar}, \eqref{EF-LPA}, \eqref{EF-LPA-Z}, \eqref{EF-Groupoid}, and \eqref{EF-fullgrps} are equivalent statements follows from \cite[Corollary~4.3 and Corollary~4.4]{ABHS}, \cite[Theorem~1]{TCadv}, and \cite[Theorem~D]{NO2019}.  Our main tool to complete the proof is to give a new description of the abelian monoid of Murray-von Neumann equivalence classes of idempotents in $\mathsf{M}_\mathbb{N}(L_\mathsf{k}(E))$ (resp. in $C^*(E) \otimes \Kk$).  We will show this monoid is generated by idempotents in the diagonal subalgebra of $\mathsf{M}_\mathbb{N}(L_\mathsf{k}(E))$ (resp. $C^*(E) \otimes \Kk$) subject to the relation of Murray-von Neumann equivalence via partial isometries that normalize the diagonal subalgebra.  This generalizes the result of Matsumoto \cite{KM2013} for which he proved the same result for strongly connected graphs.

\section{Extension of Matsumoto's result}

Let $A$ be an irreducible nonpermutation matrix with nonnegative integer entries, let $\mathcal{O}_A$ be the Cuntz-Krieger algebra \cite{CK80}, and let $\D_A$ be its canonical diagonal subalgebra.  In \cite{KM2013}, Matsumoto proves that if $p$ and $q$ are projections in the subalgebra $\mathcal{D}_A \otimes c_0(\NN)$ of $\mathcal{O}_A \otimes \Kk$ consisting of diagonal matrices with entries in $\D_A$, then $[p] = [q]$ in $K_0( \mathcal{O}_A \otimes \Kk )$ if and only if there exists a partial isometry $v \in \mathcal{O}_A \otimes \Kk$ such that $v^*v = p$, $vv^* = q$, and 
$$
v (\mathcal{D}_A \otimes c_0(\NN)) v^* \subseteq \mathcal{D}_A \otimes c_0(\NN) \quad \text{and} \quad v^* (\mathcal{D}_A \otimes c_0(\NN))v \subseteq \mathcal{D}_A \otimes c_0(\NN).
$$
In this section, we will generalize this result to Leavitt path algebras and graph $C^*$-algebras.

\begin{dfn}
Let $A$ be a $*$-ring and let $D$ be a $*$-subring of $A$.  An element $v \in A$ is a \emph{normalizer of $D$} provided that $v D v^* \subseteq D$ and $v^* D v \subseteq D$. The set of \emph{normalizers of $D$} will be denoted by 
\[\N^*( A, D) := \{ v \in A \ \mid \  v D v^* \subseteq D, v^* D v \subseteq D \}.\]
For projections $p, q \in D$, we write $p \underset{D}{\sim} q$ provided that there exists $v \in \mathcal{N}^*(A, D)$ such that $v^*v = p$ and $vv^*=q$.  
\end{dfn}

A computation shows that $\underset{D}{\sim}$ is an equivalence relation on the set of projections in $D$.  For each projection $p$ in $D$, denote the equivalence class of $p$ by $[p]_{D}$.  

For a ring $A$, $\mathsf{M}_\NN(A)$ denotes the ring of all infinite matrices indexed by $\NN$ with all but finitely many of entries are zero.  For a unital ring $A$, let $\{ e_{i,j} \}$ be the standard system of matrix units in $\mathsf{M}_\NN(A)$, that is, $e_{i,j}$ is the element in $\mathsf{M}_\NN(A)$ whose $(i,j)$ entry is $1$ and all other entries are zero.  For an element $a$ of $A$, $a \otimes e_{i,j}$ will denote the element of $\mathsf{M}_{\NN} (A)$ with $a$ in its $(i,j)$ entry and zero in all other entry.  Let $D^s$ be the $*$-subring of $\mathsf{M}_{\NN} (A)$ consisting of all diagonal matrices with entries in $D$.  Note that every projection in $D^s$ is equal to $\sum_{k = 1}^m p_k \otimes e_{k,k}$, where $p_k$ is a projection in $D$.  For $p = \sum_{ k = 1}^m p_k \otimes e_{k,k}$ and $q = \sum_{k=1}^n q_k \otimes e_{k,k}$ projections in $D^s$, we denote the \emph{orthogonal sum} by 
$$p \oplus q:= \sum_{ k = 1}^m p_k \otimes e_{k,k} + \sum_{ k = 1}^n q_k \otimes e_{m+k, m+k }.$$
Since $p_k$ and $q_k$ are projections in $D$, $p\oplus q$ is a projection in $D^s$.  
We set $\N_s^*( A, D) := \N^* ( \mathsf{M}_\NN(A), D^s)$.

\begin{dfn}
Let $A$ be a $*$-ring and let $D$ be a $*$-subring of $A$.  Let $\V^*( A , D)$ to be the set 
$$\left\{ [ p ]_{ D^s } \ \mid \ p \text{ a projection in } D^s\right\}.$$
\end{dfn}
A computation shows that $\V^*( A , D)$ with orthogonal sum is an abelian monoid. 

\begin{prp}\label{lem:rel-ktheory}
Let $A$ be a $*$-ring and let $D$ be a $*$-subring of $A$.  Defining addition on $\V^*(A,D)$ by
$$[p]_{D^s } + [q]_{D^s} := [ p \oplus q ]_{ D^s },$$
$\V^*(A,D)$ becomes an abelian monoid.
\end{prp}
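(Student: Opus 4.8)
The plan is to establish, in order: (i) that the rule $[p]_{D^s}+[q]_{D^s}:=[p\oplus q]_{D^s}$ is well defined on $\V^*(A,D)$; (ii) associativity; (iii) that $[0]_{D^s}$, the class of the zero matrix, is a two-sided identity; and (iv) commutativity. The ingredients I would use are: the fact that $\underset{D^s}{\sim}$ is an equivalence relation (established earlier in exactly this generality, and applicable with $\mathsf{M}_\NN(A)$ and $D^s$ in place of $A$ and $D$); the trivial observation that if $x,y\in\mathsf{M}_\NN(A)$ have their nonzero entries in disjoint sets of rows and in disjoint sets of columns, then $x^*y,\,y^*x,\,xy^*,\,yx^*$ all vanish, as do $xdy^*,\,ydx^*,\,x^*dy,\,y^*dx$ for every diagonal $d\in D^s$, so that $u:=x+y$ satisfies $u^*u=x^*x+y^*y$, $uu^*=xx^*+yy^*$, $udu^*=xdx^*+ydy^*$, $u^*du=x^*dx+y^*dy$, and in particular $u\in\N_s^*(A,D)$ whenever $x,y\in\N_s^*(A,D)$; and one compression trick that compensates for $\mathsf{M}_\NN(A)$ being non-unital: if $v\in\N_s^*(A,D)$ has $v^*v=p$ and $vv^*=p'$, with $p=\sum_{k=1}^m p_k\otimes e_{k,k}$ and $p'=\sum_{k=1}^{m'}p'_k\otimes e_{k,k}$, then $\bar{v}:=p'vp$ again lies in $\N_s^*(A,D)$, still has $\bar{v}^*\bar{v}=p$ and $\bar{v}\bar{v}^*=p'$ (a short calculation using $v^*v=p$, $vv^*=p'$ and that $p,p'$ are idempotent), and has all its nonzero entries in rows $\le m'$ and columns $\le m$ because $\bar{v}=p'\bar{v}=\bar{v}p$.

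For well-definedness I would prove: if $p\underset{D^s}{\sim}p'$ and $q\underset{D^s}{\sim}q'$, for arbitrary diagonal representations of the four projections, then $p\oplus q\underset{D^s}{\sim}p'\oplus q'$; specialising to $p'=p$, $q'=q$ with possibly different representations then also shows that $[p\oplus q]_{D^s}$ is independent of all choices hidden in $\oplus$. Choose normalizers $v,w$ realizing the two equivalences and, by the compression trick, assume $v$ is supported in rows $\le m'$ and columns $\le m$, and $w$ in rows $\le n'$ and columns $\le n$, where $q=\sum_{k=1}^n q_k\otimes e_{k,k}$ and $q'=\sum_{k=1}^{n'}q'_k\otimes e_{k,k}$. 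Let $\widetilde{w}\in\mathsf{M}_\NN(A)$ have $(m'+i,\,m+j)$ entry equal to the $(i,j)$ entry of $w$ and all other entries $0$; a direct check gives $\widetilde{w}^*\widetilde{w}=\sum_k q_k\otimes e_{m+k,m+k}$, $\widetilde{w}\widetilde{w}^*=\sum_k q'_k\otimes e_{m'+k,m'+k}$, and $\widetilde{w}\in\N_s^*(A,D)$ (this last point following from $w\in\N_s^*(A,D)$ after relabelling indices). Since $v$ and $\widetilde{w}$ have disjoint row supports and disjoint column supports, the observation above yields $u:=v+\widetilde{w}\in\N_s^*(A,D)$ with $u^*u=v^*v+\widetilde{w}^*\widetilde{w}=p\oplus q$ and $uu^*=vv^*+\widetilde{w}\widetilde{w}^*=p'\oplus q'$, which is the claim.

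The remaining items are short. Associativity holds already at the level of representatives: $(p\oplus q)\oplus r$ and $p\oplus(q\oplus r)$ are literally the same diagonal matrix, namely the one listing the diagonal entries of $p$, then those of $q$, then those of $r$. The zero matrix is a projection in $D^s$ and $0\oplus q=q\oplus 0=q$ on the nose, so $[0]_{D^s}$ is a two-sided identity. For commutativity I would take $u=u_1+u_2$ with $u_1$ having $(n+k,\,k)$ entry $p_k$ for $1\le k\le m$ and $u_2$ having $(k,\,m+k)$ entry $q_k$ for $1\le k\le n$: the summands again have disjoint row and column supports, one computes $u^*u=p\oplus q$ and $uu^*=q\oplus p$, and for $d\in D^s$ the matrices $u_i d u_i^*$ and $u_i^* d u_i$ are diagonal with entries of the form $p_k d_\bullet p_k$ or $q_k d_\bullet q_k$, which lie in $D$ since $D$ is a $*$-subring; hence $u\in\N_s^*(A,D)$ and $[p]_{D^s}+[q]_{D^s}=[q]_{D^s}+[p]_{D^s}$.

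The only genuinely non-formal point --- and the reason the statement is not entirely automatic --- is that $\mathsf{M}_\NN(A)$ has no unit and $A$ need not have one either, so one cannot relocate blocks by conjugating with permutation unitaries; the compression $v\mapsto p'vp$, together with the relocating partial isometries $\widetilde{w}$, $u_1$, $u_2$, must be written down explicitly and checked by hand to normalize $D^s$. Once this is done, the disjoint-support observation of the first paragraph turns everything else into bookkeeping.
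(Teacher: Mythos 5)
Your proof is correct. The paper offers no argument for this proposition beyond the remark that ``a computation shows'' it, so there is nothing to compare against in detail; your write-up supplies exactly that computation, and it correctly isolates the only two points that are not purely formal, namely that $\oplus$ depends on the chosen diagonal representations (handled by your well-definedness argument) and that the absence of units in $A$ and $\mathsf{M}_\NN(A)$ rules out permutation unitaries (handled by the compression $v\mapsto p'vp$ and the explicitly checked relocating partial isometries, which do lie in $\N_s^*(A,D)$). The only cosmetic slip is the claim that $0\oplus q=q$ ``on the nose,'' which requires allowing the empty representation of the zero projection; otherwise it is only equal up to $\underset{D^s}{\sim}$, which your well-definedness and commutativity arguments already cover.
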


We now prove two lemmas about $\V^*(A,D)$ that we will use in the proof of Theorem~\ref{thm:monoid}.  The first lemma holds for a general $*$-ring and the second specializes to $L_\ZZ(E)$.

\begin{lem}\label{lem:11todiag}
Let $A$ be a $*$-ring and let $D$ be a commutative $*$-subring of $A$.  Suppose $p_1, p_2, \ldots, p_n$ are projections in $D$ such that $p_i p_j = 0$ for all $i \neq j$.  Then $\left[ \left( \sum_{i=1}^n p_i \right) \otimes e_{1,1}\right]_{D^s}= \left[ \sum_{ i = 1}^n p_i \otimes e_{i, i}\right]_{D^s}$.   
\end{lem}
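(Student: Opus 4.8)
The plan is to write down a single explicit normalizing partial isometry in $\mathsf{M}_\NN(A)$ that implements the desired equivalence. Put $p := \sum_{i=1}^n p_i$, which is a projection in $D$ because the $p_i$ are mutually orthogonal projections, and set
\[
v := \sum_{i=1}^n p_i \otimes e_{i,1} \in \mathsf{M}_\NN(A),
\]
which has only finitely many nonzero entries and has adjoint $v^* = \sum_{i=1}^n p_i \otimes e_{1,i}$, since each $p_i$ is self-adjoint.

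First I would verify that $v$ is a partial isometry with the prescribed source and range projections. Using the matrix-unit multiplication rule $(a\otimes e_{i,j})(b\otimes e_{k,l}) = \delta_{j,k}\,(ab)\otimes e_{i,l}$ together with $p_i p_j = \delta_{i,j}\,p_i$, one computes $v^*v = \sum_i p_i \otimes e_{1,1} = p\otimes e_{1,1}$ and $vv^* = \sum_{i,j} p_i p_j \otimes e_{i,j} = \sum_i p_i \otimes e_{i,i}$; in particular $vv^*v = v$, so $v$ is a partial isometry, and $v^*v = \bigl(\sum_i p_i\bigr)\otimes e_{1,1}$, $vv^* = \sum_i p_i \otimes e_{i,i}$ are precisely the two projections of $D^s$ appearing in the statement.

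Next I would check the normalizer inclusions $v D^s v^* \subseteq D^s$ and $v^* D^s v \subseteq D^s$. Writing a general element of $D^s$ as $d = \sum_k d_k \otimes e_{k,k}$ with $d_k \in D$, a direct computation (again with the matrix-unit rule, and using that $D$ is commutative) gives
\[
v d v^* = \sum_{i} d_1 p_i \otimes e_{i,i}
\qquad\text{and}\qquad
v^* d v = \Bigl(\sum_{i} d_i p_i\Bigr)\otimes e_{1,1},
\]
where orthogonality of the $p_i$ is what kills the off-diagonal terms in the first equality. Since $d_1 p_i, d_i p_i \in D$, both elements lie in $D^s$. Hence $v \in \N^*(\mathsf{M}_\NN(A), D^s) = \N_s^*(A,D)$, so $\bigl(\sum_i p_i\bigr)\otimes e_{1,1} \underset{D^s}{\sim} \sum_i p_i \otimes e_{i,i}$, which is exactly the claimed identity $\bigl[\bigl(\sum_i p_i\bigr)\otimes e_{1,1}\bigr]_{D^s} = \bigl[\sum_i p_i \otimes e_{i,i}\bigr]_{D^s}$.

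I do not expect a genuine obstacle here: the argument is a direct verification with the natural ``block permutation'' partial isometry. The only points needing care are the bookkeeping with the matrix-unit multiplication and adjoint formulas, and tracking which hypothesis does what — commutativity of $D$ is exactly what forces $vdv^*$ and $v^*dv$ back into $D^s$, while mutual orthogonality of the $p_i$ is what makes $vv^*$ (and the off-diagonal part of $vdv^*$) diagonal. Neither hypothesis can be dropped.
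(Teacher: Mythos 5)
Your proof is correct and is essentially the paper's own argument: the paper uses exactly the same normalizing element $S=\sum_{i=1}^n p_i\otimes e_{i,1}$, verifies $S^*S=\bigl(\sum_i p_i\bigr)\otimes e_{1,1}$ and $SS^*=\sum_i p_i\otimes e_{i,i}$, and checks $SxS^*, S^*xS\in D^s$ by the same matrix-unit computation using commutativity of $D$ and orthogonality of the $p_i$. No differences worth noting.
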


\begin{proof}
Set $S = \sum_{ i = 1}^n  p_i \otimes e_{i, 1}$.  Then $S^* =  \sum_{ i = 1}^n  p_i \otimes e_{1, i}$.  Note that $S, S^*$ are elements of $\mathsf{M}_{\NN}(A)$ and 
\begin{align*}
S^*S &= \sum_{ i, j = 1}^n p_i p_j \otimes e_{1,i} e_{j,1} & SS^* &= \sum_{ i, j = 1 }^n p_ip_j \otimes e_{i,j} \\
    &= \sum_{ i = 1}^n p_i \otimes e_{1,1}  & &= \sum_{ i = 1}^n p_i \otimes e_{i,i}. \\
    &= \left( \sum_{i=1}^n p_i \right) \otimes e_{1,1}
\end{align*}
Let $x = \sum_{ i = 1}^\infty a_i \otimes e_{i,i} \in D^s$, where all but finitely many of the $a_i$s are zero.  Then
\begin{align*}
S x S^* &= \sum_{i, j = 1}^n (p_i \otimes e_{i,1} ) x ( p_j \otimes e_{1,j} ) =  \sum_{ i , j = 1}^n \sum_{ k = 1}^\infty (p_i \otimes e_{i,1 } )(a_k \otimes e_{k,k}) (p_j \otimes e_{1,j})\\
        &= \sum_{i, j = 1}^n p_i a_1 p_j \otimes e_{i,j} = \sum_{ i, j=1}^n p_i p_j a_1 \otimes e_{i,j} = \sum_{ i =1}^n p_i a_1 \otimes e_{i, i} \in D^s \quad \text{and} \\
S^* x S &= \sum_{ i , j = 1}^n (p_i \otimes e_{1,i } )x(p_j \otimes e_{j,1}) = \sum_{ i , j = 1}^n \sum_{ k = 1}^\infty (p_i \otimes e_{1,i } )(a_k \otimes e_{k,k}) (p_j \otimes e_{j,1}) \\
        &= \sum_{ i = 1}^n p_i a_i p_i \otimes e_{1,1} \in D^s.
\end{align*}
Therefore, $S \in \mathcal{N}_s^*(A,D)$ which completes the proof that \[\left[ \left( \sum_{i=1}^n p_i \right) \otimes e_{1,1}\right]_{D^s}= \left[ \sum_{ i = 1}^n p_i \otimes e_{i, i}\right]_{D^s}.\]
\end{proof}

\begin{dfn}
Let $E$ be a graph.  Then the commutative subring  
$$\spn \{ \mu \mu^*  \mid  \mu \text{ path in } E \}$$ 
of $L_R(E)$ is called the \emph{diagonal} subring of $L_R(E)$ and is denoted by $\D_R(E)$.  The closure of $\spn_\CC \{ \mu \mu^*  \mid  \mu \text{ path in } E \}$ in $C^*(E)$ will be denoted by $\D(E)$.
\end{dfn}
To simplify notation, set $\D_R^s(E) := (\D_R(E))^s$ of diagonal matrices in $\mathsf{M}_\NN(L_R(E))$ with entries in $\D_R(E)$.  When \(R=\ZZ\), $\D_\ZZ(E)$ is a $*$-subring of $L_\ZZ(E)$.  For ease of notation, elements of $\V^* ( L_\ZZ(E), \D_\ZZ(E))$ will be denoted by $[p]_{\D}$ instead of $[p]_{\D_\ZZ^s(E)}$, where $p$ is a projection in $\D_\ZZ^s(E)$.

\begin{lem}\label{lem:range-source-proj}
Let $E$ be a graph and let $\mu_1, \mu_2, \ldots, \mu_n$ be paths in $E$.  Then 
$$\left[ \sum_{ k = 1}^n \mu_k \mu_k^* \otimes e_{k,k} \right]_{ \D } = \left[ \sum_{ k = 1}^n r(\mu_k) \otimes e_{k,k} \right]_{ \D}$$
in $\V(L_\ZZ(E), \D_\ZZ(E))$.
\end{lem}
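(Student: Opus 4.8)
The plan is to realize the asserted equivalence by exhibiting a single normalizer. Put $S := \sum_{k=1}^n \mu_k \otimes e_{k,k} \in \mathsf{M}_\NN(L_\ZZ(E))$ (an element of finite support), so that $S^* = \sum_{k=1}^n \mu_k^* \otimes e_{k,k}$. Using $e_{k,k}e_{l,l} = \delta_{kl}e_{k,k}$ together with the Leavitt relation $\mu_k^*\mu_k = r(\mu_k)$, one computes $S^*S = \sum_{k=1}^n \mu_k^*\mu_k \otimes e_{k,k} = \sum_{k=1}^n r(\mu_k)\otimes e_{k,k}$ and $SS^* = \sum_{k=1}^n \mu_k\mu_k^* \otimes e_{k,k}$. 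Both are projections in $\D_\ZZ^s(E)$: each vertex $r(\mu_k)$ and each $\mu_k\mu_k^*$ lies in $\D_\ZZ(E)$ and is self-adjoint, and $(\mu_k\mu_k^*)^2 = \mu_k r(\mu_k)\mu_k^* = \mu_k\mu_k^*$.

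The substantive step is to check $S \in \N_s^*(L_\ZZ(E), \D_\ZZ(E)) = \N^*(\mathsf{M}_\NN(L_\ZZ(E)), \D_\ZZ^s(E))$. Writing a general element of $\D_\ZZ^s(E)$ as $x = \sum_i a_i \otimes e_{i,i}$ with $a_i \in \D_\ZZ(E)$ and only finitely many $a_i \neq 0$, the same matrix-unit bookkeeping gives $SxS^* = \sum_{k=1}^n \mu_k a_k \mu_k^* \otimes e_{k,k}$ and $S^*xS = \sum_{k=1}^n \mu_k^* a_k \mu_k \otimes e_{k,k}$, so everything reduces to the claim that $\D_\ZZ(E)$ is invariant under $a \mapsto \mu a\mu^*$ and $a \mapsto \mu^* a\mu$ for a fixed path $\mu$. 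By $\ZZ$-linearity it suffices to test this on a spanning element $a = \nu\nu^*$ of $\D_\ZZ(E)$. For $\mu a\mu^* = (\mu\nu)(\mu\nu)^*$ this is immediate, as $\mu\nu$ is either $0$ or a path of $E$. For $\mu^* a\mu = (\mu^*\nu)(\mu^*\nu)^*$ I would invoke the standard multiplication rule in $L_\ZZ(E)$: $\mu^*\nu$ equals $0$, a path $\gamma$ (when $\nu = \mu\gamma$), a ghost path $\gamma^*$ (when $\mu = \nu\gamma$), or a vertex (when $\mu = \nu$); correspondingly $(\mu^*\nu)(\mu^*\nu)^*$ is $0$, $\gamma\gamma^*$, $\gamma^*\gamma = r(\gamma)$, or a vertex projection, each of which lies in $\D_\ZZ(E)$. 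I expect this case analysis to be the only real obstacle; the rest is routine.

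Granting this, $S$ is a normalizer of $\D_\ZZ^s(E)$ with $S^*S = \sum_{k=1}^n r(\mu_k)\otimes e_{k,k}$ and $SS^* = \sum_{k=1}^n \mu_k\mu_k^*\otimes e_{k,k}$, so by the definitions of the relation $\underset{D}{\sim}$ (with $D = \D_\ZZ^s(E)$) and of $\V^*(L_\ZZ(E),\D_\ZZ(E))$ we obtain $\left[\sum_{k=1}^n \mu_k\mu_k^*\otimes e_{k,k}\right]_\D = \left[\sum_{k=1}^n r(\mu_k)\otimes e_{k,k}\right]_\D$, which is the assertion. One could instead argue index by index with the normalizers $\mu_k\otimes e_{k,k}$, but handling all $k$ at once with $S$ avoids invoking additivity of orthogonal sums in $\V^*$, which shifts matrix positions and so does not directly apply to the fixed diagonal positions $1,\dots,n$ appearing here.
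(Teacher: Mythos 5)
Your proof is correct and follows essentially the same route as the paper: the paper also takes the single normalizer $V=\sum_{k=1}^n \mu_k\otimes e_{k,k}$, computes $V^*V$ and $VV^*$, and verifies $VxV^*, V^*xV\in\D^s_\ZZ(E)$. The only difference is that the paper cites \cite[Lemma~4.1]{BCW2017} for the fact that a path normalizes $\D_\ZZ(E)$, whereas you verify it directly by the (correct) case analysis on $\mu^*\nu$.
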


\begin{proof}
Let $\mu$ be a path in $E$.  By a similar argument as in \cite[Lemma~4.1]{BCW2017}, $\mu \in \N^*( L_\ZZ(E), \D_\ZZ(E))$.  Set $V = \sum_{ k = 1}^n \mu_k \otimes e_{k,k}$ and let $x = \sum_{k = 1}^\infty x_k \otimes e_{k,k} \in \D$, where $x_k \in \D_\ZZ(E)$ and all but finitely many of the $x_k$s are zero.  Then
\[V x V^* = \sum_{k=1}^n \mu_k x_k \mu_k^* \otimes e_{k,k}   \quad \text{and} \quad V^* x V = \sum_{k=1}^n \mu_k^* x_k \mu_k \otimes e_{k,k}\]
are elements of $\D^s_\ZZ(E)$.  Therefore, $V \in \N_s^*( L_\ZZ(E), \D)$.  Since
\begin{align*}
V^*V &= \sum_{ k = 1}^n \mu_k^* \mu_k \otimes e_{k,k} = \sum_{ k = 1}^n r(\mu_k) \otimes e_{k,k} \quad \text{and} \\
VV^* &= \sum_{ k = 1}^n \mu_k \mu_k^* \otimes e_{k,k},
\end{align*}
we have $\left[ \sum_{ k = 1}^n \mu_k \mu_k^* \otimes e_{k,k} \right]_{ \D} = \left[ \sum_{ k = 1}^n r(\mu_k)\otimes e_{k,k} \right]_{ \D}$.
\end{proof}

To prove Theorem~\ref{thm:monoid}, we will need to describe the idempotents of $\D^s_R(E)$ for an integral domain $R$ (Lemma~\ref{lem:diag-proj-graph}).  In order to do this, we need the graph groupoid.  For a graph $E$, let $\partial E$ be the union of all finite paths ending at a singular vertex and all infinite paths.  The sets
\[\mathcal{Z}(\mu \setminus F) := \{ \mu x \in \partial E : x \in \partial E\} \setminus \left( \bigcup_{ e \in F } \{ \mu e y \in \partial E : y \in \partial E \} \right)\]
generate a topology on $\partial E$ that makes $\partial E$ a locally compact, Hausdorff space.  The \emph{graph groupoid}, $\mathcal{G}_E$, is defined as a set
\[
\mathcal{G}_E = \{ (x, m-n, y) \in \partial E \times \ZZ \times \partial E : |x| \geq m, |y| \geq n, \sigma_E^m(x) = \sigma^n(y) \},
\]
where $\sigma_E \in \partial E^{\geq 1} \to \partial E$ is the \emph{shift map} that removes the first edge in the path.  The product is defined by $(x,k, y)(y,l,z) = (x, k+l, z)$.  We give $\mathcal{G}_E$ the topology generated by 
\[
\mathcal{Z}_E(U, m, n, V) = \{ (x,m-n,y) \in \mathcal{G}_E : x \in U, y \in V, \sigma_E^m(x)=\sigma_E^n(y) \},
\]
where $U$ and $V$ are open subsets of $\partial E$ such that $\sigma_E^m \vert_{U}$ and $\sigma_E^n \vert_{V}$ are injections, and $\sigma_E^m(U)=\sigma_E^n(V)$.

\begin{lem}\label{lem:diag-proj-graph}
Let $E$ be a graph and let $R$ be an integral domain.  Let $p$ be a nonzero idempotent in $\D_R(E)$.  Then there are paths $\mu_1, \mu_2, \ldots, \mu_n$ in $E$ and finite sets $S_1, S_2, \ldots, S_n$ of $E^1$ such that 
\begin{enumerate}
    \item each $S_i$ is a subset of $r(\mu_i) E^1$;

    \item $\left\{\mu_i \mu_i ^* - \sum_{ e \in S_i } (\mu_i e) (\mu_i e)^* : i =1, 2, \ldots, n \right\}$ is a collection of mutually orthogonal idempotents; and 

    \item $p = \sum_{ i = 1}^n \left(\mu_i \mu_i^* - \sum_{ e \in S_i } (\mu_i e) (\mu_i e)^* \right)$.
\end{enumerate}
In particular, every idempotent in $\D_R(E)$ is the image of a projection in $L_\ZZ(E)$.
\end{lem}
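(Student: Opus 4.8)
The plan is to first bring an arbitrary element $d=\sum_{\mu\in F}c_\mu\,\mu\mu^*$ of $\D_R(E)$ — where $F$ is a finite set of paths and the $c_\mu$ lie in $R$ — into a normal form as an $R$-linear combination of \emph{mutually orthogonal} idempotents of exactly the shape appearing in the statement, and only afterwards to use that $p$ is idempotent and that $R$ is a domain. Write $\nu\succeq\mu$ to mean that $\mu$ is a prefix of $\nu$ (equality allowed). Let $\hat F$ be the finite, prefix-closed set of all prefixes of paths in $F$; for $\mu\in\hat F$ set $S_\mu:=\{e\in E^1:\mu e\in\hat F\}$, a finite subset of $r(\mu)E^1$, and put
\[
g_\mu:=\mu\mu^*-\sum_{e\in S_\mu}(\mu e)(\mu e)^*.
\]
From $\mu\mu^*(\mu e)(\mu e)^*=(\mu e)(\mu e)^*$ and $(\mu e)(\mu e)^*(\mu f)(\mu f)^*=\delta_{e,f}(\mu e)(\mu e)^*$ one checks that each $g_\mu$ is an idempotent, and $S_\mu\subseteq r(\mu)E^1$, so condition (1) will hold automatically for the paths that survive.

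The combinatorial core is to re-express $d$ through the $g_\mu$. I would use only the elementary product rule in $L_R(E)$: for paths $\alpha,\beta$ one has $\alpha\alpha^*\beta\beta^*$ equal to $\alpha\alpha^*$ if $\alpha\succeq\beta$, to $\beta\beta^*$ if $\beta\succeq\alpha$, and to $0$ otherwise. Two consequences: (i) if $\mu,\nu\in\hat F$ are incomparable then any $\alpha\succeq\mu$ and $\beta\succeq\nu$ are themselves incomparable (a common extendee would make $\mu,\nu$ comparable), so every cross term in $g_\mu g_\nu$ vanishes; and (ii) if $\nu\in\hat F$ is a proper prefix of $\mu\in\hat F$, with $e$ the first edge of $\mu$ past $\nu$ (so $e\in S_\nu$), then each term $\alpha\alpha^*$ of $g_\mu$ satisfies $\alpha\succeq\mu\succeq\nu e$, whence $\alpha\alpha^*g_\nu=\alpha\alpha^*-\alpha\alpha^*(\nu e)(\nu e)^*=0$. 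Thus the nonzero $g_\mu$, $\mu\in\hat F$, are mutually orthogonal — this gives condition (2). A downward induction on $|\mu|$ then proves, for every $\mu\in\hat F$,
\[
\mu\mu^*=\sum_{\nu\in\hat F,\ \nu\succeq\mu}g_\nu,
\]
using $\mu\mu^*=g_\mu+\sum_{e\in S_\mu}(\mu e)(\mu e)^*$, that each $\mu e$ with $e\in S_\mu$ is longer than $\mu$, and that $\{\nu\in\hat F:\nu\succ\mu\}$ is the disjoint union over $e\in S_\mu$ of the sets $\{\nu\in\hat F:\nu\succeq\mu e\}$. Summing over $\mu\in F$,
\[
d=\sum_{\nu\in\hat F}r_\nu\,g_\nu,\qquad r_\nu:=\sum_{\mu\in F,\ \nu\succeq\mu}c_\mu\in R,
\]
an $R$-combination of finitely many mutually orthogonal idempotents of the desired form.

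Now let $p$ be a nonzero idempotent and write $p=\sum_{\nu\in\hat F}r_\nu g_\nu$ as above, discarding those $\nu$ with $g_\nu=0$. Orthogonality gives $p^2=\sum_\nu r_\nu^2 g_\nu$, so $p^2=p$ forces, after multiplying by $g_\nu$, that $(r_\nu^2-r_\nu)g_\nu=0$ for every $\nu$. At this point I would invoke the standard fact that $L_R(E)\cong R\otimes_\ZZ L_\ZZ(E)$ is a free $R$-module, hence torsion-free over the integral domain $R$; since $g_\nu\neq0$ this yields $r_\nu^2=r_\nu$, and as $R$ is a domain, $r_\nu\in\{0,1\}$. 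Therefore $p=\sum_{\nu:\,r_\nu=1}g_\nu$; relabelling the (finitely many, and at least one because $p\neq0$) surviving indices as $\mu_1,\dots,\mu_n$ and writing $S_i:=S_{\mu_i}$ gives (1)--(3). For the final assertion, each $g_{\mu_i}$ is a $\ZZ$-combination of products of generators, hence lies in $L_\ZZ(E)$, where it is a self-adjoint idempotent; the $g_{\mu_i}$ remain mutually orthogonal there, so $\sum_i g_{\mu_i}$ is a projection in $L_\ZZ(E)$, and the canonical ring homomorphism $L_\ZZ(E)\to L_R(E)$ sends it to $p$ (the idempotent $0$ being the image of $0$).

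The manipulations in the second paragraph are a little tedious but straightforward, and are pleasantly uniform in whether $r(\mu)$ is regular or singular; one could alternatively run the whole argument through the identification of $\D_R(E)$ with an algebra of compactly supported locally constant $R$-valued functions on $\partial E$, at the cost of importing that identification. The step I expect to be the real crux is the linear-independence input used just above: that a nonzero idempotent of $\D_R(E)$ cannot be killed by a nonzero scalar. Over a field this is transparent from the description of $\D_{\mathsf{k}}(E)$ as the compactly supported locally constant $\mathsf{k}$-valued functions on $\partial E$; for a general integral domain it comes down to the freeness (or merely torsion-freeness) of $L_R(E)$, equivalently of $\D_R(E)$, as an $R$-module, and the domain hypothesis is genuinely needed here — for instance, for $R=\ZZ/6$ and $E$ a single vertex, the idempotent $3\in R=\D_R(E)$ is not a sum of mutually orthogonal idempotents of the stated form.
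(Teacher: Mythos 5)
Your argument is correct, and it takes a genuinely different route from the paper. The paper does not work inside $L_R(E)$ at all: it invokes the isomorphism $L_R(E)\cong A_R(\mathcal{G}_E)$ of Clark--Sims carrying $\D_R(E)$ onto the locally constant compactly supported functions on $\partial E$, observes that (since $R$ is a domain) an idempotent function is the characteristic function $1_K$ of a compact open set $K$, and then cites the proof of Lemma~2.1 of Brownlowe--Carlsen--Whittaker to decompose $K$ as a finite disjoint union of sets $\Z(\mu\setminus S)$; pulling back gives the stated form. You instead stay entirely algebraic: the prefix-closure of the support of $p$, the elements $g_\mu=\mu\mu^*-\sum_{e\in S_\mu}(\mu e)(\mu e)^*$, the telescoping identity $\mu\mu^*=\sum_{\nu\succeq\mu}g_\nu$, and the orthogonality computations are all correct (in the orthogonality case where $\nu$ is a proper prefix of $\mu$ you silently discard the terms $(\nu f)(\nu f)^*$ with $f\in S_\nu\setminus\{e\}$; this is justified since $\alpha\succeq\nu e$ makes them orthogonal to $\alpha\alpha^*$, but deserves the half-sentence). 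The domain hypothesis then enters through your coefficient argument $(r_\nu^2-r_\nu)g_\nu=0\Rightarrow r_\nu\in\{0,1\}$, which is exactly where the paper uses pointwise evaluation of functions. What each approach buys: the paper's proof is shorter given the imported machinery (Steinberg algebra model plus the topological decomposition of compact opens of $\partial E$), whereas yours is self-contained apart from the single standard input that $L_R(E)\cong R\otimes_\ZZ L_\ZZ(E)$ is free (or merely torsion-free) as an $R$-module -- equivalently one may quote a monomial basis, or just freeness of $\D_R(E)$; without some such input your penultimate step would not close, so do cite it explicitly. Your normal form also yields slightly more than the lemma asks, namely that every element (not just every idempotent) of $\D_R(E)$ is an $R$-combination of mutually orthogonal idempotents of the stated shape, and it makes the final assertion about lifting to a projection in $L_\ZZ(E)$ immediate, which in the paper's version is left as a remark following from the explicit form of the decomposition.
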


\begin{proof}
By \cite[Example~3.2]{ClarkSims}, there exists an isomorphism from $L_R(E)$ to the Steinberg algebra $A_R(\mathcal{G}_E)$ sending $\D_R(E)$ to $A_R( \mathcal{G}_E^0)$ such that $\mu \mu^*$ is sent to $1_{\Z(\mu)}$.  Using this isomorphism, we need to prove that for all nonzero idempotents $p$ in $A_R( \mathcal{G}_E^0)$, there are paths $\mu_1, \mu_2, \ldots, \mu_n$ in $E$ and finite sets $S_1, S_2, \ldots, S_n$ of $E^1$ such that 
\begin{enumerate}
    \item each $S_i$ is a subset of $r(\mu_i) E^1$;

    \item $\{ \Z (\mu_i) \setminus S_i \}_i$ are mutually orthogonal sets; and 

    \item  $p =\sum_{ i =1}^n 1_{\Z(\mu_i)\setminus F_i}$.
\end{enumerate}
Let $p$ be a nonzero idempotent in $A_R( \mathcal{G}_E^0)$.  Then $p = 1_K$ for some compact, open subset of $\mathcal{G}_E^0$.  By \cite[Proof of Lemma~2.1]{BCW2017}, 
$$K= \bigsqcup_{ x \in T } \Z( \mu_x \setminus S_x ),$$
where $S_x$ is a finite subset $r(\mu_x) E^1$.  Since $K$ is a compact subset of $\mathcal{G}_E^0$, we may assume that $T$ is a finite set.  Then $\{ \mu_x \}_{x \in T}$, $\{ S_x \}_{ x \in T }$, and $\{ \Z( \mu_x \setminus S_x) \}_{x \in T}$ are the desired collections of paths and sets.
\end{proof}

In \cite{AraGoodearl2012}, Ara and Goodearl constructed a monoid $M(E, C, S)$ for any separated graph and they proved that there is a natural isomorphism between $M(E, C, S)$ and the $\V$-monoid of the separated Leavitt path algebra.  We now describe their monoid in the unseparated case, which we denote by $M_E$.  The monoid $M_E$ is generated by $v \in E^0$ and $q_S$, $S$ runs through all nonempty finite subsets of $v E^1$ for infinite emitters $v$ and is subject to the relations
\begin{itemize}
    \item[(i)] $v = \sum_{ e \in v E^1 } r(e)$ for all regular vertices $v$; 

    \item[(ii)] $v = \sum_{ e \in S } r(e) + q_S$ for all infinite emitters $v$; and 

    \item[(iii)] $q_{S_1} = q_{S_2}+ \sum_{e \in S_2 \setminus S_1 } r(e)$ for all nonempty subsets $S_1 \subseteq S_2 \subseteq vE^1$, where $v$ is an infinite emitter.  
\end{itemize}
For any field $\mathsf{k}$, the isomorphism $\lambda_E$ from $M_E$ to $\V(L_\mathsf{k}(E))$ is given by $v \mapsto [v]$, $q_S \mapsto w - \sum_{ e \in S } ee^*$, for any $v \in E^0$ and any nonempty finite subset $S$ of $wE^1$ with $w$ an infinite emitter.

For a graph $E$ and for a field $\mathsf{k}$, there exists an injective homomorphism $\iota_E \colon L_\ZZ(E) \to L_\mathsf{k}(E)$ such that $\iota_E$ is the identity map on $E^0$ and $E^1$.

\begin{thm}\label{thm:monoid}
Let $E$ be a graph.  Then the mapping
$$v \mapsto [v \otimes e_{1,1}]_{\D} \quad \text{and} \quad q_S \mapsto \left[ \left(w - \sum_{ e\in S} e e^*\right) \otimes e_{1,1}\right]_{\D}$$ 
where $S$ is a nonempty finite subset of $wE^1$ with $w$ an infinite emitter induces an isomorphism from $M_E$ to $\V^*(L_\ZZ(E), \D_\ZZ(E))$.  Moreover, for any field $\mathsf{k}$, the map $\beta_E$ from $\V^*(L_\ZZ(E), \D_\ZZ(E))$ to $\V(L_\mathsf{k}(E))$ which sends $[p]_\D$ to $[\iota_E(p)]$ is an isomorphism. 
\end{thm}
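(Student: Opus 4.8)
The plan is to realize the map in the statement as a composable pair: define $\varphi\colon M_E\to\V^*(L_\ZZ(E),\D_\ZZ(E))$ on the presenting generators by $v\mapsto[v\otimes e_{1,1}]_\D$ and $q_S\mapsto\big[(w-\sum_{e\in S}ee^*)\otimes e_{1,1}\big]_\D$, prove that $\varphi$ is a well-defined surjective monoid homomorphism, then check that $\beta_E$ is a well-defined monoid homomorphism with $\beta_E\circ\varphi=\lambda_E$, and finally deduce from this factorization (and the fact that $\lambda_E$ is an isomorphism) that both $\varphi$ and $\beta_E$ are isomorphisms. Two elementary moves will be used throughout: \textbf{(a)} for mutually orthogonal projections $a_1,\dots,a_n\in\D_\ZZ(E)$ one has $\big[(\sum_i a_i)\otimes e_{1,1}\big]_\D=\sum_{i=1}^n[a_i\otimes e_{1,1}]_\D$, which follows from Lemma~\ref{lem:11todiag} together with the definition of the orthogonal sum; and \textbf{(b)} for a path $\mu$ and a projection $a\in\D_\ZZ(E)$ with $a\le r(\mu)$ one has $[\mu a\mu^*\otimes e_{1,1}]_\D=[a\otimes e_{1,1}]_\D$, witnessed by the partial isometry $\mu a\otimes e_{1,1}$, which normalizes $\D^s_\ZZ(E)$ because $\mu\in\N^*(L_\ZZ(E),\D_\ZZ(E))$ and $\D_\ZZ(E)$ is commutative. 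Move (b) is the $n=1$ case of Lemma~\ref{lem:range-source-proj} when $a=r(\mu)$, and a routine extension of its proof in general.

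To see that $\varphi$ is well defined I would use the presentation of $M_E$ and verify that the images of the generators satisfy relations (i)--(iii); each of these is simply the image under $\varphi$, via (a) and (b) and the identity $[ee^*\otimes e_{1,1}]_\D=[r(e)\otimes e_{1,1}]_\D$, of an honest orthogonal decomposition of projections in $\D_\ZZ(E)$: relation (i) comes from $v=\sum_{e\in vE^1}ee^*$ at a regular vertex, relation (ii) from $v=\sum_{e\in S}ee^*+(v-\sum_{e\in S}ee^*)$ at an infinite emitter, and relation (iii) from $v-\sum_{e\in S_1}ee^*=(v-\sum_{e\in S_2}ee^*)+\sum_{e\in S_2\setminus S_1}ee^*$. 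So $\varphi$ extends to a monoid homomorphism.

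For surjectivity, take a projection $p=\sum_{k=1}^m p_k\otimes e_{k,k}$ in $\D^s_\ZZ(E)$; relabeling blocks gives $[p]_\D=\sum_k[p_k\otimes e_{1,1}]_\D$. Lemma~\ref{lem:diag-proj-graph} writes each $p_k$ as a finite sum of mutually orthogonal idempotents $\mu\mu^*-\sum_{e\in S}(\mu e)(\mu e)^*$ with $S\subseteq r(\mu)E^1$ finite, so by (a) it is enough to place each $\big[(\mu\mu^*-\sum_{e\in S}(\mu e)(\mu e)^*)\otimes e_{1,1}\big]_\D$ in the image of $\varphi$. Writing $a=r(\mu)-\sum_{e\in S}ee^*$, a projection in $\D_\ZZ(E)$ with $a\le r(\mu)$ and $\mu a\mu^*=\mu\mu^*-\sum_{e\in S}(\mu e)(\mu e)^*$, move (b) reduces the class to $[a\otimes e_{1,1}]_\D$, and then I split into cases on $r(\mu)$: if $r(\mu)$ is an infinite emitter and $S\neq\emptyset$ this is $\varphi(q_S)$; if $S=\emptyset$ it is $\varphi(r(\mu))$; and if $r(\mu)$ is regular then, discarding zero summands, $S\subsetneq r(\mu)E^1$, $a=\sum_{e\in r(\mu)E^1\setminus S}ee^*$, and (a) together with $[ee^*\otimes e_{1,1}]_\D=[r(e)\otimes e_{1,1}]_\D$ gives $\sum_{e\notin S}\varphi(r(e))$. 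Hence $\varphi$ is onto.

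Finally I would apply $\iota_E$ entrywise to get a $*$-homomorphism $\mathsf{M}_\NN(L_\ZZ(E))\to\mathsf{M}_\NN(L_\mathsf{k}(E))$ carrying $\D^s_\ZZ(E)$ into the diagonal. If $v\in\N_s^*(L_\ZZ(E),\D_\ZZ(E))$ with $v^*v=p$ and $vv^*=q$, then $\iota_E(v)$ is a partial isometry witnessing $[\iota_E(p)]=[\iota_E(q)]$ in $\V(\mathsf{M}_\NN(L_\mathsf{k}(E)))\cong\V(L_\mathsf{k}(E))$, so $[p]_\D\mapsto[\iota_E(p)]$ is well defined; it is additive since orthogonal sums go to block-diagonal direct sums. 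Evaluating $\beta_E\circ\varphi$ on the generators of $M_E$ yields $v\mapsto[v]$ and $q_S\mapsto[w-\sum_{e\in S}ee^*]$, which is precisely $\lambda_E$, hence $\beta_E\circ\varphi=\lambda_E$. Since $\lambda_E$ is an isomorphism, $\varphi$ is injective, so by surjectivity $\varphi$ is an isomorphism, and then $\beta_E=\lambda_E\circ\varphi^{-1}$ is one too. The hard part is the surjectivity argument: pinning down move (b) (checking that $\mu a\otimes e_{1,1}$ genuinely normalizes $\D^s_\ZZ(E)$, which extends the computation in Lemma~\ref{lem:range-source-proj}) and organizing the case split on $r(\mu)$ so that every diagonal idempotent is exhibited as a monoid sum of images of the generators $v$ and $q_S$. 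The remaining verifications --- relations (i)--(iii), well-definedness of $\beta_E$, and $\beta_E\circ\varphi=\lambda_E$ --- are short computations with the standard matrix units and the two preceding lemmas.
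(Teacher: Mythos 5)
Your proposal is correct and follows essentially the same route as the paper: verify relations (i)--(iii) on the generators via Lemma~\ref{lem:11todiag} and Lemma~\ref{lem:range-source-proj}, get injectivity from the factorization $\beta_E\circ\varphi=\lambda_E$ through the Ara--Goodearl isomorphism, and prove surjectivity by reducing, via Lemma~\ref{lem:diag-proj-graph}, to classes $\left[\left(\mu\mu^*-\sum_{e\in S}(\mu e)(\mu e)^*\right)\otimes e_{1,1}\right]_{\D}$ handled by a case split on $r(\mu)$ using a normalizing partial isometry built from $\mu$. Your ``move (b)'' with $\mu a\otimes e_{1,1}$ is just the adjoint of the element $V$ the paper uses in the infinite-emitter case, applied slightly more uniformly, so the difference is organizational rather than substantive.
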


\begin{proof}
For each infinite emitter $w$ and for each nonempty finite subset $S$ of $wE^1$, set $p_{S} = w - \sum_{ e\in S} e e^*$.  We will show that $a_v:=  [ v \otimes e_{1,1} ]_{ \D } $ and $a_S:= [p_S\otimes e_{1,1} ]_{\D }$ satisfies the relations defining $M_E$.  Let $v$ be a regular vertex.  Set $vE^1 = \{ e_1, \ldots, e_m \}$. 
 Since $e_k e_k^*$'s are mutually orthogonal projections, 
\begin{align*}
a_v &\underset{\phantom{\text{Lem.}~\ref{lem:range-source-proj}}}{=} \left[ \left( \sum_{k=1}^m e_k e_k^* \right) \otimes e_{1,1} \right]_{ \D  } \underset{\text{Lem.}~\ref{lem:11todiag}}{=} \left[ \sum_{ k=1}^m e_k e_k^* \otimes e_{k, k}  \right]_{ \D  } \\
&\underset{\text{Lem.}~\ref{lem:range-source-proj}}{=} \left[ \sum_{ k=1}^m r(e_k) \otimes e_{k,k} \right]_{ \D  } = \sum_{ k = 1}^m [ r(e_k) \otimes e_{1,1}]_{ \D  } = \sum_{ e \in vE^1} a_{r(e)}.
\end{align*}

Let $w$ be an infinite emitter and let $S = \{ f_1, f_2, \ldots, f_n \}$ be a nonempty finite subset of $wE^1$.  As $p_S$ and $f_k f_k^*$'s are mutually orthogonal projections,   
\begin{align*}
a_w &\underset{\phantom{\text{Lem.}~\ref{lem:diag-proj-graph}}}{=} \left[ \left( p_S + \sum_{k=1}^m f_k f_k^* \right) \otimes e_{1,1} \right]_{ \D } \underset{\text{Lem.}~\ref{lem:11todiag}}{=} a_S + \left[  \sum_{k=1 }^m f_k f_k^* \otimes e_{k+1,k+1} \right]_{ \D } \\
&\underset{\text{Lem.}~\ref{lem:range-source-proj}}{=} a_S + \left[  \sum_{k=1 }^n r(f_k) \otimes e_{k+1,k+1} \right]_{ \D } \underset{\phantom{\text{Lem.}~\ref{lem:diag-proj-graph}}}{=} a_S + \sum_{ k = 1}^n a_{ r(f_k)}.
\end{align*}
Let $S_1$ and $S_2$ be nonempty finite subsets of $wE^1$ such that $S_1 \subseteq S_2$.  Set  $S_2 \setminus S_1 = \{g_1, \ldots, g_t \}$.  As $w - \sum_{ e \in S_2 } e e^*$ and $g_kg_k^*$'s are mutually orthogonal projections,
\begin{align*}
a_{S_1}&\underset{\phantom{\text{Lem.}~\ref{lem:diag-proj-graph}}}{=} \left[ \left(w - \sum_{ e \in S_1 } e e^* \right)\otimes e_{1,1} \right]_{\D} \\
&\underset{\phantom{\text{Lem.}~\ref{lem:diag-proj-graph}}}{=} \left[ \left(w - \sum_{ e \in S_2 } e e^* + \sum_{ e \in S_2 \setminus S_1 }  e e^* \right)\otimes e_{1,1} \right]_{\D} \\
&\underset{\text{Lem.}~\ref{lem:11todiag}}{=}a_{S_2}  + \left[ \sum_{ k = 1 }^t  g_k g_k^* \otimes e_{k+1,k+1} \right]_{\D} \\
&\underset{\text{Lem.}~\ref{lem:range-source-proj}}{=}a_{S_2} + \left[ \sum_{ k = 1 }^t  r(g_k) \otimes e_{k+1,k+1} \right]_{\D} \\
&\underset{\phantom{\text{Lem.}~\ref{lem:diag-proj-graph}}}{=} a_{S_2}+ \sum_{ k = 1}^t  a_{r(g_k)}   = a_{S_2} + \sum_{e \in S_2\setminus S_1} a_{r(e)}.    
\end{align*}
Hence, there exists a homomorphism $\alpha_E \colon M_E \to \V^*( L_\ZZ(E), D_\ZZ(E))$ such that $\alpha_E(v)= a_v$ and $\alpha_E( q_S) =a_S$.  It is clear that $[p]_{\D } \mapsto [p]$ defines a homomorphism $\beta_E$ from $\V^*( L_\ZZ(E), D_\ZZ(E))$ to $\V(L_\mathsf{k}(E))$ such that $\beta_E\circ \alpha_E = \lambda_E$.  Since $\lambda_E$ is an isomorphism, $\alpha_E$ is an injection.  We are left to show that $\alpha_E$ is a surjection.

We claim that for all path $\mu$ in $E$ and for any finite subset $S$ of $r(\mu)E^1$, $\left[ \left(\mu \mu^* - \sum_{ e \in S } (\mu e)(\mu e)^*\right) \otimes e_{1,1} \right]_{\D}$ is in the range of $\alpha_E$.  Let $\mu$ be a path in $E$ and let $S$ be a finite subset of $r(\mu)E^1$.  Suppose $r(\mu)$ is a regular vertex.  Set $vE^1 \setminus S = \{ e_1, e_2, \ldots, e_n \}$.  Then 
\begin{align*}
&\left[ \left(\mu \mu^* - \sum_{ e \in S } (\mu e) (\mu e)^*\right) \otimes e_{1,1} \right]_{\D} \\
&\underset{\phantom{\text{Lem.}~\ref{lem:11todiag}}}{=} \left[ \sum_{ e \in r(\mu) E^1 \setminus S }  (\mu e) (\mu e)^* \otimes e_{1,1}\right]_{ \D } \underset{\text{Lem.}~\ref{lem:11todiag}}{=}\left[ \sum_{ k=1 }^n (\mu e_k)(\mu e_k)^* \otimes e_{k,k}\right]_{ \D } \\
&\underset{\text{Lem.}~\ref{lem:range-source-proj}}{=}\left[ \sum_{ k=1 }^n  r(\mu e_k) \otimes e_{k,k}\right]_{ \D }  = \sum_{ k = 1}^n [ r( \mu e_k)  \otimes e_{1,1} ]_{ \D }  =  \sum_{ k = 1}^n \alpha_E ( r(\mu e_k) ).
\end{align*}
Suppose $r(\mu)$ is an infinite emitter and $S = \emptyset$.  Then 
\begin{align*}
\left[ \left(\mu \mu^* - \sum_{ e \in S } (\mu e)(\mu e)^*\right) \otimes e_{1,1} \right]_{\D} &= [ \mu \mu^* \otimes e_{1,1}]_{\D}  \underset{\text{Lem.}~\ref{lem:range-source-proj}}{=} [ r(\mu) \otimes e_{1,1}]_{\D} \\
&=  \alpha_E ( r(\mu) ).
\end{align*}
Suppose $r(\mu)$ is an infinite emitter and $S \neq \emptyset$.  Set 
\[V = \mu^*  \left(\mu \mu^* - \sum_{ e \in S } (\mu e)(\mu e)^*\right) \otimes e_{1,1}.\]
Then $V^*=  \left(\mu \mu^* - \sum_{ e \in S } (\mu e)(\mu e)^*\right)\mu \otimes e_{1,1}$.  Using the same argument as \cite[Proof of Lemma~4.1]{BCW2017}, $\mu$ is an element of $\N^*(L_\ZZ(E), \D_\ZZ(E))$.  And since $\mu \mu^* - \sum_{ e \in S } (\mu e)(\mu e)^* \in \D_\ZZ(E)$, a computation shows that $V$ is an element of $\N_s^* (L_\ZZ(E), \D_\ZZ(E))$.  Note that
\begin{align*}
V^*V &= \left(\mu \mu^* - \sum_{ e \in S } (\mu e)(\mu e)^*\right) \mu \mu^* \left(\mu \mu^* - \sum_{ e \in S } (\mu e)(\mu e)^*\right) \otimes e_{1,1} \\
 &= \left(\mu \mu^* - \sum_{ e \in S } (\mu e)(\mu e)^*\right) \left(\mu \mu^* - \sum_{ e \in S } (\mu e)(\mu e)^*\right) \otimes e_{1,1} \\
 &= \left(\mu \mu^* - \sum_{ e \in S } (\mu e)(\mu e)^* \right) \otimes e_{1,1} \quad \text{and} \\
 VV^* &= \mu^*  \left(\mu \mu^* - \sum_{ e \in S }(\mu e)(\mu e)^*\right)\mu \otimes e_{1,1} = \left( r(\mu)  - \sum_{ e \in S } e e^* \right) \otimes e_{1,1},
\end{align*}
we have that 
\begin{align*}
\alpha_E( q_S ) &= \left[ \left( r(\mu) - \sum_{ e \in S} e e^* \right) \otimes e_{1,1} \right]_{ \D} = \left[ \left(\mu \mu^* - \sum_{ e \in S } (\mu e)(\mu e)^*\right) \otimes e_{1,1} \right]_{\D}.
\end{align*}
Thus, proving the claim.

We now show that $\alpha_E$ is a surjection.  Let $p$ be a projection in $\D$.  Then $p = \sum_{ k = 1}^n p_k \otimes e_{k,k}$, where $p_k$ is a projection in $\D_\ZZ(E)$.  Since $[p]_{ \D } = \sum_{ k = 1}^n [ p_k \otimes e_{1,1} ]_{ \D }$, to show that $\alpha_E$ is a surjective homomorphism, it is enough to prove that $q \otimes e_{1,1}$ is in the range of $\alpha_E$ for all projections $q$ in $\D_\ZZ(E)$.  Let $q$ be a projection in $\D_\ZZ(E)$.  By Lemma~\ref{lem:diag-proj-graph}, there are paths $\mu_1, \mu_2, \ldots, \mu_n$ in $E$ and finite sets $S_1, S_2, \ldots, S_n$ such that 
\begin{enumerate}
    \item each $S_i$ is a subset of $r(\mu_i)E^1$;

    \item $\left\{\mu_i \mu_i^* - \sum_{ e \in S_i } (\mu_i e)(\mu_i e)^* : i =1, 2, \ldots, n \right\}$ is a collection of mutually orthogonal projections; and 

    \item $q = \sum_{ i = 1}^n p_i$, where $p_i := \mu_i \mu_i^* - \sum_{ e \in S_i } (\mu_i e)(\mu_i e)^*$.
\end{enumerate}
Since $p_k$'s are mutually orthogonal projections, by Lemma~\ref{lem:11todiag}, 
$$[q \otimes e_{1,1} ]_{ \D } = \left[ \sum_{ k = 1}^n p_k \otimes  e_{k,k} \right]_{ \D } = \sum_{ k = 1}^n [ p_k \otimes  e_{1,1} ]_{ \D }.$$
By the above claim, $[ p_k \otimes  e_{1,1} ]_{ \D }$ is in the range of $\alpha_E$.  Thus, $[q \otimes e_{1,1} ]_{ \D }$ is in the range of $\alpha_E$.  We may now conclude that $\alpha_E$ is a surjection.

We have just shown the desired result that $\alpha_E$ is an isomorphism.  To prove the last statement, since $\beta_E \circ \alpha_E = \lambda_E$ and since $\lambda_E$ and $\alpha_E$ are isomorphisms, we have that $\beta_E$ is an isomorphism.
\end{proof}

We may also view $L_\ZZ(E)$ as a ring by forgetting its $*$-structure.  For this reason, we now define $\V(A,D)$ for a ring $A$ and a commutative subring $D$ of $A$.  Set 
\[
\N(A, D) := \{ (u,v) \in A \times A : u Dv \subseteq D, v Du \subseteq D \}.
\]
Observe that $\N^*(L_\ZZ(E), D_\ZZ(E)) \subseteq \N(L_\ZZ(E), D_\ZZ(E))$.

For idempotents $p$ and $q$ in $D$, we write $p \underset{D}{\sim} q$ provided that there are $u, v \in A$ such that $uv = p$, $vu = q$, and $(u,v) \in \N(A,D)$.  Then
\[
\V(A, D) := \{ [p]_{D^s} : p \text{ an idempotent in } D^s \}
\]
is an abelian monoid with addition defined by orthogonal sum.

\begin{cor}\label{cor:monoid}
For any graph $E$ and for any field $\mathsf{k}$, the inclusions 
\[
L_\ZZ(E) \hookrightarrow L_\mathsf{k}(E) \quad \text{and} \quad L_\ZZ(E) \hookrightarrow C^*(E)
\]
induce isomorphisms from $\V^*( L_\ZZ(E), \D_\ZZ(E))$ to $\V( L_\mathsf{k}(E), \D_\mathsf{k}(E))$ and from $\V^*( L_\ZZ(E), \D_\ZZ(E))$ to $\V^*( C^*(E), \D(E))$.  Moreover, the maps 
\[\theta_E \colon \V( L_\mathsf{k}(E), \D_\mathsf{k}(E)) \to \V(L_\mathsf{k}(E)), \ [e]_\D \mapsto [e] \] and 
\[\overline{\theta}_E \colon \V^*( C^*(E), \D(E)) \to  \V(C^*(E)), \ [p]_\D \mapsto [p]\] 
are isomorphisms.
\end{cor}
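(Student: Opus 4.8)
The plan is to derive the corollary from Theorem~\ref{thm:monoid} and Lemma~\ref{lem:diag-proj-graph} by short diagram chases. Fix a field $\mathsf{k}$, and extend $\iota_E$ entrywise to a ring homomorphism $\mathsf{M}_\NN(L_\ZZ(E))\to\mathsf{M}_\NN(L_\mathsf{k}(E))$ carrying $\D_\ZZ^s(E)$ into $\D_\mathsf{k}^s(E)$. First I would check that this map sends $\N_s^*(L_\ZZ(E),\D_\ZZ(E))$-equivalences to $\N(\mathsf{M}_\NN(L_\mathsf{k}(E)),\D_\mathsf{k}^s(E))$-equivalences: if $w$ implements $p\underset{\D_\ZZ^s}{\sim}q$, then $(\iota_E(w^*),\iota_E(w))$ implements $\iota_E(p)\underset{\D_\mathsf{k}^s}{\sim}\iota_E(q)$, because $\D_\mathsf{k}^s(E)$ is the $\mathsf{k}$-linear span of $\iota_E(\D_\ZZ^s(E))$, so the two normalizer inclusions valid over $\ZZ$ spread by $\mathsf{k}$-linearity. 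This yields a monoid homomorphism $\gamma_E\colon\V^*(L_\ZZ(E),\D_\ZZ(E))\to\V(L_\mathsf{k}(E),\D_\mathsf{k}(E))$, $[p]_\D\mapsto[\iota_E(p)]_{\D_\mathsf{k}}$, and by construction $\theta_E\circ\gamma_E=\beta_E$. Since $\beta_E$ is an isomorphism by Theorem~\ref{thm:monoid}, $\gamma_E$ is injective and $\theta_E$ is surjective.

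To complete the Leavitt case I would prove that $\gamma_E$ is surjective. Since $\D_\mathsf{k}(E)$ is commutative, an idempotent of $\D_\mathsf{k}^s(E)$ has the form $\sum_{k=1}^n d_k\otimes e_{k,k}$ with each $d_k$ an idempotent of $\D_\mathsf{k}(E)$; as $\mathsf{k}$ is an integral domain, Lemma~\ref{lem:diag-proj-graph} writes $d_k=\iota_E(p_k)$ for a projection $p_k\in\D_\ZZ(E)$, so the idempotent equals $\iota_E(p)$ for the projection $p=\sum_k p_k\otimes e_{k,k}$ of $\D_\ZZ^s(E)$, and thus its class lies in the range of $\gamma_E$. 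Hence $\gamma_E$ is bijective, hence an isomorphism, and $\theta_E=\beta_E\circ\gamma_E^{-1}$ is an isomorphism as well.

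For the $C^*$-statement I would first observe that the proof of Theorem~\ref{thm:monoid} applies with $C^*(E)$ in the role of $L_\ZZ(E)$ and $\D(E)$ in the role of $\D_\ZZ(E)$: Lemma~\ref{lem:11todiag} is already stated for arbitrary $*$-rings; $\mu\in\N^*(C^*(E),\D(E))$ for every path $\mu$ by the argument of \cite[Lemma~4.1]{BCW2017}, giving the $C^*$-version of Lemma~\ref{lem:range-source-proj}; a projection of $\D(E)\cong C_0(\partial E)$ is $1_K$ for a compact open $K$, which \cite[Proof of Lemma~2.1]{BCW2017} expresses as a finite disjoint union of cylinder sets $\Z(\mu_i\setminus S_i)$, giving the $C^*$-version of Lemma~\ref{lem:diag-proj-graph}; and the Ara--Goodearl isomorphism is replaced by the graph $C^*$-algebra analogue $M_E\cong\V(C^*(E))$. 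The outcome is an isomorphism $\overline\alpha_E\colon M_E\to\V^*(C^*(E),\D(E))$ together with the fact that $[p]_\D\mapsto[p]$ is an isomorphism $\V^*(C^*(E),\D(E))\to\V(C^*(E))$, that is, $\overline\theta_E$ is an isomorphism. It then remains to identify the map $\overline\gamma_E\colon\V^*(L_\ZZ(E),\D_\ZZ(E))\to\V^*(C^*(E),\D(E))$, $[p]_\D\mapsto[\overline\iota_E(p)]_\D$, where $\overline\iota_E\colon L_\ZZ(E)\hookrightarrow C^*(E)$ is the inclusion — this is well defined because a $*$-normalizer $w$ of $\D_\ZZ^s(E)$ is a $*$-normalizer of $\D^s(E)$, the bounded maps $x\mapsto wxw^*$, $x\mapsto w^*xw$ carrying the dense subspace $\spn_\CC\{\mu\mu^*\otimes e_{k,k}\}$ of each matrix corner of $\D^s(E)$ into $\D_\ZZ^s(E)\subseteq\D^s(E)$, which is norm-closed — with $\overline\alpha_E\circ\alpha_E^{-1}$. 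Both are monoid homomorphisms, and on the generators $a_v=[v\otimes e_{1,1}]_\D$ and $a_S=[(w-\sum_{e\in S}ee^*)\otimes e_{1,1}]_\D$ (which generate $\V^*(L_\ZZ(E),\D_\ZZ(E))$ because $\alpha_E$ is surjective) they agree, namely $\overline\gamma_E(a_v)=[v\otimes e_{1,1}]_\D=\overline\alpha_E(\alpha_E^{-1}(a_v))$ and likewise for $a_S$; hence $\overline\gamma_E=\overline\alpha_E\circ\alpha_E^{-1}$ is an isomorphism.

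The step I expect to be the main obstacle is the $C^*$-side bookkeeping of the third paragraph: checking that the entire apparatus supporting Theorem~\ref{thm:monoid} genuinely transfers from $L_\ZZ(E)$ to $C^*(E)$ — most delicately the analogue of Lemma~\ref{lem:diag-proj-graph} for the completed diagonal $\D(E)$, and locating the right reference for $M_E\cong\V(C^*(E))$ — together with the routine but norm-theoretic verification that a $\ZZ$-normalizer of $\D_\ZZ(E)$ induces a $*$-normalizer of $\D(E)$. By contrast, the Leavitt case needs only Theorem~\ref{thm:monoid}, Lemma~\ref{lem:diag-proj-graph}, and the well-definedness of $\gamma_E$.
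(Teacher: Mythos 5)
Your proof is correct, and the Leavitt half is essentially the paper's own argument: the induced map $\gamma_E$ (the paper's $\alpha$) satisfies $\theta_E\circ\gamma_E=\beta_E$, so injectivity falls out of Theorem~\ref{thm:monoid}, while surjectivity comes from the last sentence of Lemma~\ref{lem:diag-proj-graph}, exactly as you argue. On the $C^*$-side you take a genuinely different, more laborious route: you rerun the entire proof of Theorem~\ref{thm:monoid} with $C^*(E)$ and $\D(E)$ in place of $L_\ZZ(E)$ and $\D_\ZZ(E)$ (needing the analogue $M_E\cong\V(C^*(E))$ as input, plus $C^*$-versions of Lemmas~\ref{lem:range-source-proj} and~\ref{lem:diag-proj-graph}), and then match $\overline\gamma_E$ with $\overline\alpha_E\circ\alpha_E^{-1}$ on generators. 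The paper instead avoids redoing any generator bookkeeping: it factors $\overline\theta_E\circ\overline\alpha=j_E\circ\beta_E$, where $j_E\colon\V(L_\CC(E))\to\V(C^*(E))$ is an isomorphism by \cite[Corollary~3.5]{HLMRT}, which gives injectivity of $\overline\alpha$ for free, and gets surjectivity from \cite[Proposition~4.1]{KPRR} together with the same compact-open decomposition used in Lemma~\ref{lem:diag-proj-graph}. In substance both routes rest on the same two external facts (the non-stable $K$-theory comparison between $L_\CC(E)$ and $C^*(E)$, and the groupoid model of the diagonal); what the paper's factorization buys is brevity, while yours buys an explicit $C^*$-analogue of Theorem~\ref{thm:monoid}, which is of some independent interest. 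One small wording correction in your well-definedness check for $\overline\gamma_E$: conjugation by a $*$-normalizer of $\D_\ZZ^s(E)$ carries the dense subspace $\spn_\CC\{\mu\mu^*\otimes e_{k,k}\}$ into $\spn_\CC$ of elements of $\D_\ZZ^s(E)$ (a dense subspace of $\D^s(E)$), not into $\D_\ZZ^s(E)$ itself; the continuity-plus-closedness conclusion you draw is unaffected.
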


\begin{proof}
It is clear that $\iota_E \colon L_\ZZ(E) \to L_\mathsf{k}(E)$ induces a homomorphism $\alpha$ from $\V^*(L_\ZZ(E), \D_\ZZ(E))$ to $\V( L_\mathsf{k}(E), \D_\mathsf{k}(E))$ such that $\theta_E \circ \alpha = \beta_E$, where $\beta_E \colon \V^*(L_\ZZ(E), \D_\ZZ(E)) \to \V(L_\mathsf{k}(E))$ is the isomorphism defined in Theorem~\ref{thm:monoid}.  We immediately conclude that $\alpha$ is an injection.  Since idempotents of $\D_\mathsf{k}^s(E)$ are projections in $\D_\ZZ^s(E)$ (by Lemma~\ref{lem:diag-proj-graph}), we have $\alpha$ is a surjection.  Hence, $\alpha$ is an isomorphism which implies $\theta_E$ is an isomorphism.

The inclusion $\overline{\iota}_E \colon L_\ZZ(E) \to C^*(E)$ induces a homomorphism $\overline{\alpha}$ from $\V^*(L_\ZZ(E), \D_\ZZ(E))$ to $\V^*( C^*(E), \D(E))$ such that $\overline{\theta}_E \circ \overline{\alpha} = j_E \circ \beta_E$, where $j_E$ is the homomorphism from $\V(L_\CC(E))$ to $\V(C^*(E))$ induced by the inclusion of $L_\CC(E)$ into $C^*(E)$.  By \cite[Corollary~3.5]{HLMRT}, $j_E$ is an isomorphism.  Hence, $\overline{\alpha}$ is an injection.  By \cite[Proposition~4.1]{KPRR}, there exists a $*$-isomorphism from $C^*(E)$ to $C^*(\mathcal{G}_E)$ sending $\D(E)$ to $C_0( \mathcal{G}_E^0)$ such that $\mu \mu^*$ is sent to $1_{\Z(\mu)}$.  Hence, arguing as in Lemma~\ref{lem:diag-proj-graph}, each projection in $\D(E)$ is the image of a projection in $L_\ZZ(E)$.  Thus, each projection in $\D^s(E)$ is the image of a projection in $\D_\ZZ^s(E)$.  So, $\overline{\alpha}$ is a surjection.  Consequently, $\overline{\alpha}$ is an isomorphism which implies $\overline{\theta}_E$ is an isomorphism.
\end{proof}

Recall that for a graph $E$, $\Sigma E$ is the graph where for each $v \in E^0$, we attach an infinite fan at $v$,
\[
\xymatrix{ v_1 \ar[d]_-{e_1^v} &  \ar[dl]_-{e_2^v} v_2  & \ar[dll]^-{e_3^v} v_3\ldots    \\ 
         v}.
\]
Then there exists a $*$-isomorphism $\psi$ from $L_\ZZ(\Sigma E)$ to $\mathsf{M}_\NN(L_\ZZ(E))$ given by 
\[ w \mapsto \begin{cases} w &\text{ if } w \in E^0 \\ v \otimes e_{\sigma(n), \sigma(n)} &\text{ if } w = v_n  \end{cases} \quad \text{and} \quad e \mapsto \begin{cases} e \otimes e_{1,1} &\text{ if } e \in E^1 \\ v \otimes e_{\sigma(n), 1 } &\text{ if } e = e_n^v \end{cases} \] where $\sigma \colon \NN \to \NN\setminus \{1\}$ is any bijection.  By \cite{Stein2024}, $\psi$ is diagonal-preserving, which then induces a diagonal-preserving isomorphism $\psi_E$ from $L_\mathsf{k}(\Sigma E)$ to $\mathsf{M}_\NN(L_\mathsf{k}(E))$ and a diagonal-preserving isomorphism $\overline{\psi}_E$ from $C^*(\Sigma E)$ to $C^*(E) \otimes \Kk$.  Moreover, the induced maps on the $K_0$-groups are the identity maps on $E^0$.

\begin{proof}[Proof of Theorem~\ref{thm-intro}]
The fact that \eqref{EF-cstar}, \eqref{EF-LPA}, \eqref{EF-LPA-Z}, \eqref{EF-Groupoid}, and \eqref{EF-fullgrps} are equivalent statements follows from \cite[Corollary~4.3 and Corollary~4.4]{ABHS}, \cite[Theorem~1]{TCadv}, and \cite[Theorem~D]{NO2019}.  

To prove that \eqref{EF-cstar} is equivalent to \eqref{cartan}, we only need to prove \eqref{EF-cstar} implies \eqref{cartan}. 
Assume \eqref{EF-cstar} is true and assume that $\mathcal{O}_\infty \cong C^*(H)$.  By \cite[Proposition~5.2.1]{ER-Refined}, we may assume $H$ is obtained from an essential graph $H_0$ with a finite number of sources added to it and $|H_0| \geq 2$.  Outsplitting $E(\infty)$ using a partition of size $|H_0^0|$, we get a graph $E_\infty$ for which there is a diagonal-preserving isomorphism between $\mathcal{O}_\infty$ and $C^*(E_\infty)$.  Outspliting $E(\infty, -)$ using a partition of size $|H_0^0|-2$, we get an essential graph $G_{\infty,-}$ and a graph $E_{\infty, -}$ for which $E_{\infty, -}$ is obtained from $G_{\infty,-}$ by adding a finite number of sources to $G_{\infty,-}$ and there is a diagonal-preserving isomorphism between $\mathcal{O}_{\infty, -}$ and $C^*(E_{\infty, -})$.   

Set 
\begin{align*}
\mathbf{v}_\infty &= (\overbrace{1, \ldots, 1}^{|H_0^0|})^T, &\mathbf{w} &=(1 +n_1, \ldots, 1 + n_{|H_0^0|})^T, \\
\mathbf{v}_{\infty, -} &=(1 +m_1, \ldots, 1 + m_{|G_{\infty,-}^0|})^T \\
\end{align*}
where $m_{i}$ is the number of sources to the vertex $i$ in $G_{\infty,-}$ and $n_i$ is the number of sources to the vertex $i$ in $H_0$.  By \cite[Lemma~7.1.6]{ER-Refined}, there are invertible matrices $U, V$ with $\det(U)=-1$ and $\det(V)=1$ such that 
\[
U( \mathsf{A}_{E_\infty}^\bullet - \mathsf{I})^T V = (\mathsf{A}_{G_{\infty,-}}^\bullet - \mathsf{I})^T
\]
and $U \mathbf{v}_{\infty}= \mathbf{v}_{\infty,-}$ in $\operatorname{coker}(\mathsf{A}_{G_{\infty,-}}^\bullet - \mathsf{I})^T $.  Here, $\mathsf{A}_E^\bullet$ is the rectangular matrix obtained from the adjacency matrix of a graph $E$ with all rows corresponding to singular vertices removed.  Since $\mathcal{O}_\infty \cong C^*(H)$, by \cite[Theorem~8.6]{ERRSDuke} and \cite[Lemma~7.1.6]{ER-Refined}, there are invertible matrices $W, Z$ with $\det(W)= \pm 1$ and $\det(Z) = 1$ such that 
\[
W(\mathsf{A}_{E_\infty}^\bullet - \mathsf{I})^T Z = (\mathsf{A}_{H_0}^\bullet - \mathsf{I})^T
\]
and $W \mathbf{v}_\infty = \mathbf{w}$ in $\operatorname{coker}(\mathsf{A}_{H_0}^\bullet - \mathsf{I})^T$.  

If $\det(W)=1$, then by \cite[Theorem~5.4]{ARS}, there exists a diagonal-preserving isomorphism between $C^*(E_\infty)$ and $C^*(H)$.  Consequently, there exists a diagonal-preserving isomorphism between $\mathcal{O}_\infty$ and $C^*(H)$.  Assume that $\det(W)=-1$.  Then 
\[
UW^{-1}(\mathsf{A}_{H_0}^\bullet - \mathsf{I})^TZ^{-1}V = (\mathsf{A}_{G_{\infty,-}}^\bullet - \mathsf{I})^T
\]
such that $UW^{-1} \mathbf{w} = \mathbf{v}_{\infty,-}$ in $\operatorname{coker}(\mathsf{A}_{G_{\infty,-}}^\bullet - \mathsf{I})^T$.  Since $\det(UW^{-1} ) = \det(Z^{-1}V)=1$, by \cite[Theorem~5.4]{ARS}, there exists a diagonal-preserving isomorphism between $C^*(H)$ and $C^*(E_{\infty,-})$.  Hence, we get a diagonal-preserving isomorphism between $C^*(H)$ and $C^*(E(\infty,-))$.  Thus, concluding the proof that \eqref{EF-cstar} is equivalent to \eqref{cartan}.

To prove the rest of the statements are equivalent we first argue that we may replace $E(\infty)$ and $E(\infty, -)$ with the following graphs  \[
E:\qquad \xymatrix@R=5mm{&\\\bullet\ar@(ul,dl)[]\ar@(u,l)[]\ar@/^/@<-0.5mm>[r]\ar@/^/@<0.5mm>[r]&\circ\ar@{=>}@/^/[l]\ar@{=>}@(dr,ur)[]}\qquad\qquad F:\qquad
\xymatrix@R=5mm{&\bullet\ar[d]\\\bullet\ar@(ul,dl)[]\ar@(u,l)[]\ar@/^/@<-0.5mm>[r]\ar@/^/@<0.5mm>[r]&\circ\ar@{=>}@/^/[l]\ar@{=>}@(dr,ur)[]\\&\bullet\ar[u]}
\]
Outsplitting the regular vertex of $F$ with two loops using two sets in the partition with the loops in different partitions, by \cite[Lemma~7.1.6]{ER-Refined} and \cite[Theorem~5.4]{ARS}, there exists a diagonal-preserving isomorphism between the graph $C^*$-algebra of the resulting graph and $C^*(E(\infty,-))$.  Since $E$ is obtained from $E(\infty)$ by outsplitting the graph $E(\infty)$, by \cite[Theorems~2.1.2 and 4.2.2]{ER-Refined}, there exists a diagonal-preserving isomorphism between $C^*(E)$ and $C^*(E(\infty))$.  Hence, we may replace $E(\infty)$ with $E$ and we may replace $E(\infty, -)$ with $F$ in Theorem~\ref{thm-intro}.

We now prove that \eqref{EF-cstar} and \eqref{Oinfty} are equivalent.  Suppose that there exists a diagonal-preserving isomorphism from $C^*(E)$ to $C^*(F)$.  So, we have a diagonal-preserving isomorphism $\varphi_1$ from $C^*(E) \otimes \Kk$ to $C^*(F) \otimes \Kk$ that sends $[1_{C^*(E)} \otimes e_{1,1}]$ to $[p_v\otimes e_{1,1}] + 3 [p_w\otimes e_{1,1}]$, where $v$ is the regular vertex of $E^0 \subseteq F^0$ and $w$ is the singular vertex of $E^0 \subseteq F^0$.  Using the graph move ``unital reduction" \cite[Theorem~2.3.2]{ER-Refined}, we have a diagonal-preserving isomorphism $\varphi_2$ from $C^*(\Sigma F)$ to $C^*(\Sigma E)$ such that $\varphi_2( p_v)=p_v$ and $\varphi_2(p_w)=p_w$ for $v , w \in E^0 \subseteq F^0$.  Then $\overline{\psi}_E \circ \varphi_2 \circ (\overline{\psi}_F)^{-1} \circ \varphi_1$ is a diagonal-preserving automorphism of $C^*(E) \otimes \Kk$ sending $[1_{C^*(E)} \otimes e_{1,1}]$ to $[p_v\otimes e_{1,1}] + 3[p_w\otimes e_{1,1}]$.  Since 
\begin{align*}[1_{C^*(E)} \otimes e_{1,1} ] &= [p_v \otimes e_{1,1}] + [p_v \otimes e_{1,1}] + 3[p_w \otimes e_{1,1}] \\ &= 3[p_v \otimes e_{1,1}] + 5[p_w \otimes e_{1,1}],\end{align*}
we have $[p_v \otimes e_{1,1}]+3[p_w \otimes e_{1,1}] = -[1_{C^*(E)}\otimes e_{1,1}]$.  In particular, the induced map in $K$-theory is the non-trivial automorphism of $K_0(C^*(E))$.  

Suppose there exists a diagonal-preserving automorphism $\lambda$ on $C^*(E)\otimes \Kk$ which induces the non-trivial automorphism on $K_0(C^*(E))$.  Then $\alpha=\overline{\psi}_F \circ \varphi_2^{-1} \circ (\overline{\psi}_E)^{-1}\circ \lambda$ is a diagonal-preserving isomorphism from $C^*(E) \otimes \Kk$ to $C^*(F) \otimes \Kk$.  Since $-[1_{C^*(E)}\otimes e_{1,1}]=[p_v \otimes e_{1,1}]+3[p_w \otimes e_{1,1}]$, $K_0(\alpha)$ sends $[1_{C^*(E)} \otimes e_{1,1} ]$ to $[p] = [p_v \otimes e_{1,1}]+3[p_w \otimes e_{1,1}] = [ 1_{C^*(F)} \otimes e_{1,1} ]$, where $p \in \D(E) \otimes c_0(\NN)$.  Since $C^*(F)$ is a simple purely infinite $C^*$-algebra, $[p] = [ 1_{C^*(F)} \otimes e_{1,1} ]$ in $\V(C^*(F))$.  By Corollary~\ref{cor:monoid} and by the diagonal-preserving isomorphism between $C^*(\Sigma F)$ and $C^*(F) \otimes \Kk$, there exists $v \in \N^*( C^*(F)\otimes \Kk, \mathcal{D}(F) \otimes c_0(\NN))$ such that $v^*v = p$ and $vv^*=1_{C^*(F)} \otimes e_{1,1}$.  Then the map $a \in C^*(E) \mapsto v \alpha(a \otimes e_{1,1}) v^*$ is a diagonal-preserving isomorphism from $C^*(E)$ to $C^*(F)$.

We have just completed the proof that \eqref{EF-cstar} and \eqref{Oinfty} are equivalent.  The proof of the equivalence of \eqref{EF-LPA} and \eqref{Linfty} uses a similar argument, where the isomorphism between $\V( L_\mathsf{k}(E), \D_\mathsf{k}(E))$ and $\V(L_\mathsf{k}(E))$ is used instead.

For 
\[\eqref{Oinfty} \implies \eqref{Groupoid} \implies \eqref{Linfty-Z} \implies \eqref{Linfty},\]we first make the following observations.  By \cite[Theorem~6.5 and the proof of Theorem~6.6]{HazLi2022}, there are isomorphisms from $\D_\ZZ(E)/\langle r(\mu)-\mu \mu^* : \mu \in E^*\rangle$ to $K_0(L_\mathsf{k}(E))$, from $\D_\ZZ(E)/\langle r(\mu)-\mu \mu^* : \mu \in E^*\rangle$ to $H_0(\mathcal{G}_E) := C_c( \partial E, \ZZ)/ \operatorname{Im} \partial_1$, and from $H_0(\mathcal{G}_E)$ to $K_0(L_\mathsf{k}(E))$ such that the diagram 
\begin{equation}\label{kthy-homology}
\vcenter{\xymatrix{ \D_\ZZ(E)/\langle r(\mu)-\mu \mu^* : \mu \in E^*\rangle \ar[r] \ar[d]  & K_0(L_\mathsf{k}(E)) \\
H_0(\mathcal{G}_E) \ar[ru] }}
\end{equation}
is commutative, where the vertical isomorphism is induced by the isomorphism between $\D_\ZZ(E)$ and $C_c( \partial E, \ZZ)$ that sends $\mu \mu^*$ to $1_{ \mathcal{Z}(\mu)}$.  

Suppose there exists a diagonal-preserving automorphism on $\mathsf{M}_\NN(L_\mathsf{k}(E))$ that induces the non-trivial automorphism on $K_0(L_\mathsf{k}(E))$.  Then there exists a diagonal-preserving automorphism $\varphi$ on $L_\mathsf{k}(\Sigma E)$ that induces the non-trivial automorphism on $K_0(L_\mathsf{k}(\Sigma E))$.  By \cite[Theorem~6.1]{Stein2019} and its proof, there exists a groupoid automorphism $\alpha$ on $G_{\Sigma E}$ such that the induced automorphism on $L_\mathsf{k}(\Sigma E)$ restricted to the diagonal is $\varphi$.  By Diagram~\eqref{kthy-homology}, $\alpha$ induces the non-trivial automorphism on zeroth groupoid homology of $\mathcal{G}_{\Sigma E}$.  Thus, \eqref{Oinfty} implies \eqref{Groupoid}.  

Suppose $\alpha$ is a groupoid automorphism of $\mathcal{G}_{\Sigma E}$ that induces the non-trivial automorphism on zeroth groupoid homology of $\mathcal{G}_{\Sigma E}$.  Then $\alpha$ induces a diagonal-preserving $*$-automorphism $L_\ZZ(\Sigma E)$.  Applying the K-theory functor to this $*$-automorphism, by Diagram~\eqref{kthy-homology}, we get the non-trivial automorphism on $K_0(L_\ZZ(\Sigma E))$.  Since there exists a diagonal-preserving $*$-isomorphism between $L_\ZZ(\Sigma E)$ and $\mathsf{M}_\NN(L_\ZZ(E))$ for which $[z]$ is sent to $[z \otimes e_{1,1}]$ in $K_0$ for all vertices $z$ of $E$, we have a $*$-automorphism $\mathsf{M}_\NN(L_\ZZ(E))$ that induces the non-trivial automorphism on $K_0( L_\ZZ(E))$.  Hence, \eqref{Groupoid} implies \eqref{Linfty-Z}.

Lastly, we prove \eqref{Linfty-Z} implies \eqref{Linfty}.  Suppose $\alpha$ is a $*$-automorphism on $\mathsf{M}_\NN(L_\ZZ(E))$ that induces the non-trivial automorphism on $K_0( L_\ZZ(E))$.  By tensoring $\alpha$ with $\mathrm{id}_\mathsf{k}$, we get a diagonal-preserving automorphism on $\mathsf{M}_\NN(L_\mathsf{k}(E))$ inducing the non-trivial automorphism on $K_0(L_\mathsf{k}(E))$.  Thus, completing the proof of the desired equivalences.
\end{proof}

\begin{rmk}
In \cite[Question~7.1.8]{ER-Refined}, we asked whether there is a diagonal-preserving isomorphism between $C^*(E)$ and $C^*(F)$, where 
\[
E:\qquad \xymatrix@R=5mm{&\\\bullet\ar@(ul,dl)[]\ar@(u,l)[]\ar@/^/@<-0.5mm>[r]\ar@/^/@<0.5mm>[r]&\circ\ar@{=>}@/^/[l]\ar@{=>}@(dr,ur)[]}\qquad\qquad F:\qquad
\xymatrix@R=5mm{&\bullet\ar[d]\\\bullet\ar@(ul,dl)[]\ar@(u,l)[]\ar@/^/@<-0.5mm>[r]\ar@/^/@<0.5mm>[r]&\circ\ar@{=>}@/^/[l]\ar@{=>}@(dr,ur)[]\\&\bullet\ar[u]}
\]
These graphs came about during the process of determining whether there exists a diagonal-preserving isomorphism between $\mathcal{O}_\infty$ and $\mathcal{O}_{\infty,-}$.  In the proof of Theorem~\ref{thm-intro} we showed the question of whether there exists a diagonal-preserving isomorphism between $\mathcal{O}_\infty$ and $\mathcal{O}_{\infty,-}$ is equivalent to the question of whether there exists a diagonal-preserving isomorphism between $C^*(E)$ and $C^*(F)$.  The simplicity of the construction of $F$ from $E$ suggest that the problem of determining when there is a diagonal-preserving isomorphism between graph algebras is a delicate one.
\end{rmk}

\end{document}